\documentclass[11pt]{amsart}
\usepackage{a4wide}


\usepackage[all]{xy}



\makeatletter
\@namedef{subjclassname@2010}{%
  \textup{2010} Mathematics Subject Classification}
\makeatother



\newtheorem{theorem}{Theorem}[section]
\newtheorem{problem}[theorem]{Problem}
\newtheorem{proposition}[theorem]{Proposition}

\newtheorem{question}[theorem]{Question}
\newtheorem{lemma}[theorem]{Lemma}
\newtheorem{claim}[theorem]{Claim}

\theoremstyle{definition}
\newtheorem{example}[theorem]{Example}

\newtheorem{remark}[theorem]{Remark}


\frenchspacing




\newcommand{\w}{\omega}

\newcommand{\Ra}{\Rightarrow}
\newcommand{\F}{\mathcal F}
\newcommand{\U}{\mathcal U}
\newcommand{\V}{\mathcal V}
\newcommand{\IQ}{\mathbb Q}
\newcommand{\IR}{\mathbb R}
\newcommand{\IZ}{\mathbb Z}
\newcommand{\Tau}{\mathsf T\!}
\newcommand{\Wallman}{\mathsf W}
\newcommand{\II}{\mathbb I}
\newcommand{\Wk}{\mathsf W_{\!\bar \kappa}}


\begin{document}




\title[Embedding topological spaces into $\kappa$-bounded spaces]{Embedding topological spaces into\\ Hausdorff $\kappa$-bounded spaces}

\author[T. Banakh]{Taras Banakh}
\address{Ivan Franko National University of Lviv, Ukraine\\
and Jan Kochanowski University in Kielce, Poland}
\email{t.o.banakh@gmail.com}

\author[S. Bardyla]{Serhii Bardyla}
\address{Institute of Mathematics, Kurt G\"{o}del Research Center\\
University of Vienna, Austria}
\email{sbardyla@yahoo.com}
\thanks{The work of the second author is supported by the Austrian Science Fund FWF (Grant  I 3709 N35).}

\author[A. Ravsky]{Alex Ravsky}
\address{Pidstryhach Institute for Applied Problems of Mechanics and Mathematics, Lviv, Ukraine}
\email{alexander.ravsky@uni-wuerzburg.de}

\date{}

\begin{abstract}
Let $\kappa$ be an infinite cardinal. A topological space $X$ is {\em $\kappa$-bounded} if the closure of any subset of cardinality $\le\kappa$ in $X$ is compact. We discuss the problem of embeddability of topological spaces  into Hausdorff (Urysohn, regular) $\kappa$-bounded spaces, and present a canonical construction of such an embedding. Also we construct a (consistent) example of a sequentially compact separable regular space that cannot be embedded into a Hausdorff $\omega$-bounded space.
\end{abstract}

\subjclass[2010]{Primary 54D30; 54D35; 54D80; 54B30}

\keywords{$\kappa$-bounded space, Wallman extension, countably compact space, sequentially compact space}

\maketitle

\section{Introduction}

It is well-known that a topological space $X$ is homeomorphic to a subspace of a compact Hausdorff space if and only if the space $X$ is Tychonoff.  In this paper we address the problem of characterization of topological spaces that embed into Hausdorff (Urysohn, regular, resp.) spaces possessing some weaker compactness properties.

One of such properties is the $\kappa$-boundedness, i.e., the compactness of closures of subsets of cardinality $\le\kappa$.
It is clear that each compact space is $\kappa$-bounded for any cardinal $\kappa$. Any ordinal $\alpha:=[0,\alpha)$ of cofinality $\mathrm{cf}(\alpha)>\kappa$, endowed with the order topology, is $\kappa$-bounded but not compact. More information on $\kappa$-bounded spaces can be found in \cite{JvM}, \cite{JSS}, \cite{JV}. Embedding of topological spaces into compact-like spaces was also  investigated in~\cite{Ban, BBR, Dik, Dik1}.

In this paper we discuss the following:

\begin{problem}\label{prob1} Which topological spaces are homeomorphic to subspaces of $\kappa$-bounded Hausdorff (Urysohn, regular) spaces?
\end{problem}

In Theorem~\ref{t:main} (and Theorem~\ref{t:main1}) we shall prove that the necessary and sufficient conditions of embeddability of a $T_1$-space $X$ into a Hausdorff (Urysohn) $\kappa$-bounded space are the (strong) $\overline\kappa$-regularity and the (strong)  $\overline\kappa$-normality of $X$, respectively.
In Theorem~\ref{t:main2} we shall prove that a sufficient condition of embeddability of a $T_1$-space $X$ into a regular $\kappa$-bounded space is the total $\overline\kappa$-normality of $X$.
The above mentioned separation axioms are introduced and studied in Section~\ref{s:norm}. In Section \ref{s:W} we shall present a canonical construction of an embedding a (strongly or totally) $\overline{\kappa}$-normal space into a Hausdorff (Urysohn or regular) $\kappa$-bounded space. In Section~\ref{s:ex} we construct a space that is totally $\overline{\omega}$-normal but not functionally Hausdorff, and a (consistent) example of a sequentially compact separable regular space which is not Tychonoff and hence does not embed into a Hausdorff $\w$-bounded space. Also, for each cardinal $\kappa$ we construct a topological space which is $\kappa$-bounded, $\overline{\kappa}$-normal, H-closed, but not Urysohn.

\section{Useful separation axioms}\label{s:norm}
Let $\F$ be a family of closed subsets of a topological space $X$.  The topological space $X$ is called
\begin{itemize}
\item {\em $\F$-regular} if for any set $F\in\F$ and point $x\in X\setminus F$ there exist disjoint open sets $U,V\subset X$ such that $F\subset U$ and $x\in V$;
\item {\em strongly $\F$-regular} if for any set $F\in\F$ and point $x\in X\setminus F$ there exist open sets $U,V\subset X$ such that $F\subset U$, $x\in V$ and $\overline{U}\cap \overline{V}=\emptyset$;
\item {\em $\F$-Tychonoff} if for any set $F\in\F$ and point $x\in X\setminus F$ there exist a continuous function $f:X\to [0,1]$ such that $f(F)\subset \{0\}$ and $f(x)=1$;
\item {\em $\F$-normal} if for any disjoint sets $A,B\in\F$ there exist disjoint open sets $U,V\subset X$ such that $A\subset U$ and $B\subset V$;
\item {\em strongly $\F$-normal} if for any disjoint sets $A,B\in\F$ there exist open sets $U,V\subset X$ such that $A\subset U$, $B\subset V$ and $\overline{U}\cap \overline{V}=\emptyset$;
\item {\em totally $\F$-normal} if for any disjoint closed sets $A\in\F$ and $B\subset X$ there exist disjoint open sets $U,V\subset X$ such that $A\subset U$ and $B\subset V$.
\end{itemize}
For families $\F$ containing all one-point subsets of $X$, these properties relate as follows:
$$\xymatrix{
\mbox{totally $\mathcal{F}$-normal}\ar@{=>}[r]&\mbox{strongly $\mathcal{F}$-normal}\ar@{=>}[r]\ar@{=>}[d]&\mbox{$\mathcal{F}$-normal}\ar@{=>}[d]\\
\mbox{$\mathcal{F}$-Tychonoff}\ar@{=>}[r]&\mbox{strongly $\mathcal{F}$-regular}\ar@{=>}[r]&\mbox{$\mathcal{F}$-regular}.
}$$
However, the total $\F$-normality does not imply the $\F$-Tychonoff property; see Example~\ref{ex} below.

\begin{proposition}\label{p:rn} If a topological space $X$ is $\F$-regular for some family $\F$ of closed Lindel\"of subspaces of $X$, then $X$ is $\F$-normal.
\end{proposition}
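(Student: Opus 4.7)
The plan is to mimic the classical Tychonoff argument that a regular Lindelöf space is normal, with the two roles of regularity and Lindelöfness supplied respectively by the hypothesis of $\F$-regularity and by the assumption that members of $\F$ are Lindelöf.

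First, fix disjoint sets $A,B\in\F$. Using $\F$-regularity in both directions (applying it to $F=B$ with witness points $a\in A$, and to $F=A$ with witness points $b\in B$), I would produce, for every $a\in A$, an open neighbourhood $V_a\ni a$ and a disjoint open set $U_a\supset B$; in particular, since $U_a$ is open and disjoint from $V_a$, one automatically obtains $\overline{V_a}\cap B=\emptyset$. Symmetrically, for every $b\in B$ choose an open $W_b\ni b$ with $\overline{W_b}\cap A=\emptyset$.

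Next, invoke the Lindelöf property. Since $A\in\F$ is Lindelöf and $\{V_a:a\in A\}$ is an open cover of $A$ in $X$, extract a countable subcover $\{V_n:n\in\omega\}$ with $\overline{V_n}\cap B=\emptyset$; similarly extract a countable subcover $\{W_n:n\in\omega\}$ of $B$ with $\overline{W_n}\cap A=\emptyset$. Then apply the standard shrinking trick: set
\[
V_n':=V_n\setminus\bigcup_{k\le n}\overline{W_k},\qquad W_n':=W_n\setminus\bigcup_{k\le n}\overline{V_k},
\]
and define $U:=\bigcup_{n\in\omega}V_n'$ and $V:=\bigcup_{n\in\omega}W_n'$. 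Each $V_n'$ and $W_n'$ is open (an open set minus finitely many closed sets), and the arranged disjointness of closures with the opposite set ensures $A\subset U$ and $B\subset V$. A routine case split on whether $m\le n$ or $n\le m$ shows $V_m'\cap W_n'=\emptyset$, hence $U\cap V=\emptyset$.

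There is no real obstacle here: the only non-formal ingredient is the observation that $\F$-regularity, applied with the set $B$ (respectively $A$) in the role of $F$, immediately yields neighbourhoods whose closures avoid $B$ (respectively $A$); the rest is the textbook countable shrinking argument, made legitimate precisely because \emph{both} $A$ and $B$ are Lindelöf subspaces of $X$.
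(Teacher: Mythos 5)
Your proof is correct and follows essentially the same route as the paper's: using $\F$-regularity to get, for each point of one set, a neighbourhood whose closure misses the other set, then applying Lindel\"ofness of $A$ and $B$ to extract countable subcovers and finishing with the standard shrinking trick $V_n\setminus\bigcup_{k\le n}\overline{W_k}$. No gaps; this matches the paper's argument step for step.
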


\begin{proof} To show that $X$ is $\F$-normal, fix any two disjoint closed sets $A,B\in\F$. By the $\F$-regularity, for every $a\in A$ there exists an open neighborhood $V_a\subset X$ of $a$ whose closure $\overline{V}_a$ in $X$ does not intersect the set $B$. By the Lindel\"of property of $A$ the open cover $\{V_a:a\in A\}$ of $A$ has a countable subcover $\{V_{a_n}\}_{n\in\w}$.

By analogy, for every $b\in B$ there exists an open neighborhood $U_b\subset X$ of $b$ whose closure $\overline{U}_b$ in $X$ does not intersect the set $A$. By the Lindel\"of property of $B$, the open cover $\{U_b:b\in B\}$ of $B$ has a countable subcover $\{U_{b_n}\}_{n\in\w}$. For every $n\in\w$ let $$V_A=\bigcup_{n\in\w}V_{a_n}\setminus\bigcup_{k\le n}\overline{U}_{b_k}\mbox{ \ and \ }U_B=\bigcup_{n\in\w}U_{b_n}\setminus\bigcup_{k\le n}\overline{V}_{a_k}.$$
Then $V_A,U_B$ are two disjoint open neighborhoods of the sets $A,B$, witnessing that the space $X$ is $\F$-normal.
\end{proof}

A subset $Y$ of a topological space $X$ is defined to be ({\em countably}) {\em paracompact in} $X$ if for each (countable) cover $\mathcal{U}$  of $Y$ by open subsets of $X$, there exists a locally finite family $\mathcal{V}$ of open subsets of $X$ such that $Y\subset\bigcup\mathcal V$ and each set $V\in\mathcal V$ is contained in some set $U\in\mathcal U$.

The proof of the following proposition is stratightforward.

\begin{proposition}\label{cp}
If a  subset $Y$ of a topological space $X$ is (countably) paracompact in $X$, then  each closed subset of $Y$ also is (countably) paracompact in $X$.
\end{proposition}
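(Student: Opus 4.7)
The plan is to reduce the problem for a closed subset $Z$ of $Y$ to the paracompactness of $Y$ in $X$ by enlarging a given cover of $Z$ to a cover of all of $Y$, and then discarding the "fat" extra element afterwards.

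First I would fix a closed subset $Z\subset Y$. Since $Z$ is closed in $Y$, the set $Y\setminus Z$ is open in $Y$, so there exists an open set $W\subset X$ with $W\cap Y=Y\setminus Z$. The key observation is that $W\cap Z=W\cap Y\cap Z=(Y\setminus Z)\cap Z=\emptyset$, so $W$ is completely disjoint from $Z$ while absorbing everything in $Y$ outside $Z$.

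Next, given any (countable) cover $\mathcal U$ of $Z$ by open subsets of $X$, I would form $\mathcal U'=\mathcal U\cup\{W\}$. This is still a (countable) cover of $Y$ by open subsets of $X$. Applying the hypothesis that $Y$ is (countably) paracompact in $X$, I obtain a locally finite family $\mathcal V$ of open subsets of $X$ such that $Y\subset\bigcup\mathcal V$ and each $V\in\mathcal V$ is contained in some member of $\mathcal U'$. Let $\mathcal V'=\{V\in\mathcal V:V\not\subset W\}$; for each $V\in\mathcal V'$ the refining property forces $V$ to lie in some $U\in\mathcal U$, so $\mathcal V'$ refines $\mathcal U$, and it is locally finite as a subfamily of $\mathcal V$.

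The only point to verify carefully is that $\mathcal V'$ still covers $Z$. For any $z\in Z\subset Y$ there is some $V\in\mathcal V$ with $z\in V$; if $V\subset W$ then $z\in W\cap Z=\emptyset$, a contradiction, so $V\in\mathcal V'$. Thus $Z\subset\bigcup\mathcal V'$, which witnesses that $Z$ is (countably) paracompact in $X$. The main --- and essentially only --- obstacle is noticing that the auxiliary set $W$ can be chosen openly \emph{in $X$} (not just in $Y$), so that the enlarged cover is of the correct type for the definition; everything else is bookkeeping.
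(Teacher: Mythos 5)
Your argument is correct: choosing $W$ open in $X$ with $W\cap Y=Y\setminus Z$ (rather than trying to use $X\setminus Z$, which need not be open since $Z$ is only closed in $Y$) is exactly the point that makes the standard "add one extra set, then discard it" argument work here. The paper omits the proof as straightforward, and yours is the intended routine argument, carried out correctly in both the paracompact and countably paracompact cases.
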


\begin{proposition}\label{p:rn1} Let $\F$ be a family of closed Lindel\"of subsets of $X$, which are countably paracompact in $X$. If the space $X$ is strongly $\F$-regular, then $X$ is strongly $\F$-normal.
\end{proposition}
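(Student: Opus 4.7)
The plan is to mimic the subtraction argument of Proposition~\ref{p:rn} with two upgrades. Strong $\F$-regularity will supply, for each $a\in A$, an open ``buffer'' $V_a^\dagger$ sandwiched as $\overline{V_a}\subset V_a^\dagger$ whose \emph{own} closure still avoids $B$; countable paracompactness will let me refine the Lindel\"of subcover to a locally finite one, so that closures distribute over the countable union.

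Concretely, for disjoint $A,B\in\F$ and each $a\in A$, strong $\F$-regularity applied to $B\in\F$ and $a\notin B$ gives open $V_a\ni a$ and $W_a\supset B$ with $\overline{V_a}\cap\overline{W_a}=\emptyset$; set $V_a^\dagger:=X\setminus\overline{W_a}$. Then $\overline{V_a}\subset V_a^\dagger$, and since every point of $W_a$ has $W_a$ itself as an open neighborhood disjoint from $V_a^\dagger$, also $\overline{V_a^\dagger}\subset X\setminus W_a$, so $\overline{V_a^\dagger}\cap B=\emptyset$. Symmetric data $U_b\ni b$, $Y_b\supset A$ with $\overline{U_b}\cap\overline{Y_b}=\emptyset$ produce buffers $U_b^\dagger:=X\setminus\overline{Y_b}$ satisfying $\overline{U_b}\subset U_b^\dagger$ and $\overline{U_b^\dagger}\cap A=\emptyset$. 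Lindel\"ofness of $A$ and $B$ yields countable subcovers $\{V_{a_n}\}_{n\in\w}$, $\{U_{b_n}\}_{n\in\w}$, and countable paracompactness of $A,B$ in $X$ refines these to locally finite open families $\{\widetilde V_n\}_{n\in\w}$, $\{\widetilde U_n\}_{n\in\w}$ with $\widetilde V_n\subset V_{a_n}$, $\widetilde U_n\subset U_{b_n}$ (an enumeration by $\w$ is obtained by grouping the refinement along its index function, which preserves local finiteness).

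Now define, as in Proposition~\ref{p:rn},
$$V_A:=\bigcup_{n\in\w}\Bigl(\widetilde V_n\setminus\bigcup_{k\le n}\overline{U_{b_k}^\dagger}\Bigr),\qquad U_B:=\bigcup_{n\in\w}\Bigl(\widetilde U_n\setminus\bigcup_{k\le n}\overline{V_{a_k}^\dagger}\Bigr).$$
Routine verification (exactly as in Proposition~\ref{p:rn}, using $\overline{U_b^\dagger}\cap A=\emptyset$, $\overline{V_a^\dagger}\cap B=\emptyset$, together with $\widetilde V_n\subset\overline{V_{a_n}^\dagger}$ and $\widetilde U_n\subset\overline{U_{b_n}^\dagger}$) shows that $V_A,U_B$ are disjoint open neighborhoods of $A,B$.

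The main obstacle is the promotion to $\overline{V_A}\cap\overline{U_B}=\emptyset$, which is where the buffers and local finiteness combine. Since $\{\widetilde V_n\}$ is locally finite, closures distribute over the countable union, and the standard inclusion $\overline{W\setminus C}\subset\overline{W}\setminus\mathrm{int}(C)$ for open $W$ and closed $C$, combined with $U_{b_k}^\dagger\subset\mathrm{int}(\overline{U_{b_k}^\dagger})$, gives
$$\overline{V_A}\subset\bigcup_{n\in\w}\Bigl(\overline{\widetilde V_n}\setminus\bigcup_{k\le n}U_{b_k}^\dagger\Bigr),$$
and symmetrically for $\overline{U_B}$. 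A hypothetical $x\in\overline{V_A}\cap\overline{U_B}$ would lie in some $\overline{\widetilde V_n}\subset V_{a_n}^\dagger$ and some $\overline{\widetilde U_m}\subset U_{b_m}^\dagger$, while missing $\bigcup_{k\le n}U_{b_k}^\dagger$ and $\bigcup_{k\le m}V_{a_k}^\dagger$. Comparing $n$ with $m$ reproduces the disjointness contradiction (if $n\le m$ then $V_{a_n}^\dagger\subset\bigcup_{k\le m}V_{a_k}^\dagger$; if $m<n$ then $U_{b_m}^\dagger\subset\bigcup_{k\le n}U_{b_k}^\dagger$), witnessing strong $\F$-normality of $X$.
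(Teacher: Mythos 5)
Your proposal is correct and follows essentially the same route as the paper: strong $\F$-regularity supplies the buffer open sets (your $V_a^\dagger$, $U_b^\dagger$ play exactly the role of the paper's $W_a$, $W_b$), Lindel\"ofness gives countable subcovers, countable paracompactness gives locally finite refinements, and the staggered subtraction plus the fact that closures distribute over locally finite unions yields $\overline{V_A}\cap\overline{U_B}=\emptyset$ by the same index-comparison argument. The only (cosmetic) difference is that the paper applies countable paracompactness to the already-subtracted cover $\{V_{a_n}\setminus\bigcup_{k\le n}\overline W_{b_k}\}_{n\in\w}$, whereas you refine the plain cover and subtract afterwards.
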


\begin{proof}
To show that $X$ is strongly $\F$-normal, fix any two disjoint closed sets $A,B\in\F$. By the strong $\F$-regularity of $X$, for every $a\in A$ and $b\in B$ there exist open sets $V_a,W_a,V_b,W_b$ in $X$ such that $a\in V_a\subset \overline V_a\subset W_a\subset \overline W_a\subset X\setminus B$ and
$b\in V_b\subset \overline V_b\subset W_b\subset \overline W_b\subset X\setminus A$. 
By the Lindel\"of property of the space $A$, the open cover $\{V_a:a\in A\}$ of $A$ has a  countable subcover $\{V_{a_n}\}_{n\in\w}$. By analogy, the open cover $\{V_b:b\in B\}$ of the Lindel\"of space $B$ has a countable subcover $\{V_{b_n}\}_{n\in\w}$.

It is easy to see that $\{V_{a_n}\setminus\bigcup_{k\le n}\overline W_{b_k}\}_{n\in\w}$ is a cover of $A$ by open subsets of $X$. By the countable paracompactness of $A$, there exists a locally finite family $\U_A$   of open sets in $X$ such that $A\subset \bigcup\U_A$ and each set $U\in\U_A$ is contained in some set $V_{a_n}\setminus\bigcup_{k\le n}\overline W_{b_k}$ and hence $\overline U\subset \overline V_{a_n}\setminus \bigcup_{k\le n}W_{b_k}\subset W_{a_n}\setminus\bigcup_{k\le n}W_{b_k}$.

Consider the open neighborhood $U_A=\bigcup\U_A$ of $A$. The local finiteness of the family $\U_A$ ensures that $\overline{U_A}=\bigcup_{U\in\U_A}\overline{U}\subset \bigcup_{n\in\w}W_{a_n}\setminus\bigcup_{k\le n}W_{b_k}$.

By analogy, we can find an open neighbrhood $U_B$ of the countably paracompact subset $B$ in $X$ such that $\overline{U_B}\subset\bigcup_{n\in\w}W_{b_n}\setminus\bigcup_{k\le n}W_{a_k}$.

It remains to observe that
$$\overline{U_A}\cap\overline{U_B}\subset\Big(\bigcup_{n\in\w}W_{a_n}\setminus\bigcup_{k\le n}W_{b_k}\Big)\cap\Big(\bigcup_{n\in\w}W_{b_n}\setminus\bigcup_{k\le n}W_{a_k}\Big)=\emptyset.$$
\end{proof}

\begin{proposition}\label{p:rn2} Each regular topological space $X$ is totally $\F$-normal for the family $\F$ of closed subsets of $X$ that are paracompact in $X$.
\end{proposition}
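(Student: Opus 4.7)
The plan is to mimic the classical proof that paracompact regular spaces are normal, but applied externally: we only need the closed paracompact set $A$ to admit a locally finite refinement in $X$, while $B$ is just separated from $\overline{U_A}$ by taking its complement.

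More concretely, I would start with disjoint closed sets $A \in \F$ and $B \subset X$. Using the regularity of $X$, for every $a \in A$ pick an open neighborhood $V_a \subset X$ of $a$ with $\overline{V_a} \cap B = \emptyset$. Then $\{V_a : a \in A\}$ is a cover of $A$ by open sets of $X$. Since $A$ is paracompact in $X$, there exists a locally finite (in $X$) family $\mathcal{V}$ of open subsets of $X$ refining this cover with $A \subset \bigcup \mathcal{V}$. For each $V \in \mathcal{V}$, choose $a(V) \in A$ with $V \subset V_{a(V)}$, so that $\overline{V} \subset \overline{V_{a(V)}}$ is disjoint from $B$.

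Set $U_A := \bigcup \mathcal{V}$. The local finiteness of $\mathcal{V}$ in $X$ yields $\overline{U_A} = \bigcup_{V \in \mathcal{V}} \overline{V}$, and since each $\overline{V}$ misses $B$, we get $\overline{U_A} \cap B = \emptyset$. Then $U_B := X \setminus \overline{U_A}$ is an open neighborhood of $B$ disjoint from $U_A$, and $U_A, U_B$ are the required disjoint open neighborhoods of $A$ and $B$.

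The only slightly delicate point is that local finiteness of $\mathcal{V}$ in $X$ (not just in $A$) is precisely what allows the equality $\overline{\bigcup \mathcal{V}} = \bigcup_{V \in \mathcal{V}} \overline{V}$; this is built into the definition of ``paracompact in $X$'', so no additional work is needed. Notice that we do not require any separation property of $B$ beyond being closed, which is exactly the asymmetry demanded by the notion of \emph{total} $\F$-normality.
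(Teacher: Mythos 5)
Your proof is correct and follows essentially the same route as the paper: regularity gives the shrinking neighborhoods $V_a$ with closures missing $B$, paracompactness of $A$ in $X$ gives the locally finite open refinement $\mathcal V$, and local finiteness yields $\overline{\bigcup\mathcal V}=\bigcup_{V\in\mathcal V}\overline V\subset X\setminus B$, after which $\bigcup\mathcal V$ and its complement's interior separate $A$ and $B$. No gaps.
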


\begin{proof}
To show that $X$ is totally $\F$-normal, fix any two disjoint closed sets $A\in\F$ and $B\subset X$. By the regularity of $X$, for every $a\in A$ there exists an open neighborhood $U_a\subset X$ such that $\overline{U_a}\subset X\setminus B$. Since $A$ is paracompact in $X$ there exists a locally finite family $\mathcal{V}$ of open subsets of $X$ such that $A\subset \bigcup\V$ and each set $V\in\V$ is contained in some set $U_a$, $a\in A$. The locally finiteness of $\mathcal{V}$ implies that $\overline{\bigcup \mathcal{V}}=\bigcup_{V\in\V}\overline{V}\subset \bigcup_{a\in A}\overline{U_a}\subset X\setminus B$. Then $\bigcup\V$ and $X\setminus\overline{\bigcup\V}$ are disjoint open neighborhoods of the sets $A$ and $B$, respectively.
\end{proof}

Let $\kappa$ be a cardinal. A topological space $X$ is called {\em totally $\overline{\kappa}$-normal} (resp. {\em strongly $\overline{\kappa}$-normal}, {\em $\overline{\kappa}$-normal}, {\em strongly $\overline{\kappa}$-regular}, {\em $\overline{\kappa}$-regular}, {\em $\overline\kappa$-Tychonoff\/}) if it is totally $\F$-normal (resp. strongly $\F$-normal, $\F$-normal, strongly $\F$-regular, $\F$-regular, $\F$-Tychonoff) for the family $\F$ of closed subsets of the closures of subsets of cardinality $\le\kappa$ in $X$. Simple examples show that the family $\F$ can be strictly larger than the family of closures of subsets of cardinality $\le \kappa$ in $X$.

\begin{proposition}\label{p:cn} Each $\kappa$-bounded Hausdorff space $X$ is $\overline{\kappa}$-normal.
\end{proposition}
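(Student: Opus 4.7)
The plan is to observe that every set in the family $\F$ defining $\overline{\kappa}$-normality is in fact compact, and then invoke the classical fact that disjoint compact sets in a Hausdorff space can be separated by disjoint open sets.

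First, I would unfold the definition: $\F$ consists of those closed subsets $F\subset X$ for which $F\subset\overline S$ for some $S\subset X$ with $|S|\le\kappa$. By the hypothesis that $X$ is $\kappa$-bounded, any such closure $\overline S$ is compact, and therefore every $F\in\F$ is a closed subspace of a compact space, hence itself compact.

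Next, I would fix disjoint $A,B\in\F$ and aim for disjoint open neighborhoods. Because $A$ and $B$ are compact and $X$ is Hausdorff, the standard two-stage argument applies: for each $a\in A$ and each $b\in B$ choose disjoint open neighborhoods $U_{a,b}\ni a$ and $V_{a,b}\ni b$; by compactness of $B$, finitely many $V_{a,b}$ cover $B$, yielding disjoint open sets $U_a\ni a$ and $V_a\supset B$; then by compactness of $A$, finitely many $U_a$ cover $A$, and intersecting the corresponding $V_a$'s produces the desired disjoint open neighborhoods $U\supset A$ and $V\supset B$. (A one-line alternative would be to first verify $\F$-regularity using the same Hausdorff-plus-compact argument, and then invoke Proposition~\ref{p:rn}, since compact sets are Lindel\"of.)

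There is essentially no obstacle here: the only point requiring care is the correct reading of the definition of $\F$, namely that closures of $\kappa$-sized subsets, being compact, force all members of $\F$ to be compact. Once this is observed, $\overline\kappa$-normality reduces to the textbook separation of disjoint compacta in a Hausdorff space.
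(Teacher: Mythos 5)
Your proposal is correct and follows essentially the same route as the paper: note that every member of $\F$ is a closed subset of a compact closure (by $\kappa$-boundedness), hence compact, and then separate the two disjoint compacta by the classical Hausdorff argument, which the paper simply cites rather than spelling out.
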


\begin{proof} Let $\F$ be the family of closed subspaces of the closures of subsets of cardinality $\le\kappa$ in $X$. Given two disjoint closed sets $A,B\in\F$, we observe that the sets $A,B$ are compact. By the Hausdorff property of $X$, the disjoint compact sets $A,B$ have disjoint open neighborhoods.
\end{proof}

In Example~\ref{ex1} we shall construct a Hausdorff $\omega$-bounded space which is not strongly $\overline{\omega}$-normal.

\begin{proposition}\label{c1} Each subspace $X$ of a $\kappa$-bounded Hausdorff space $Y$ is $\overline{\kappa}$-regular.
\end{proposition}

\begin{proof} Let $F$ be a closed subspace of the closure  of a set $E\subset X$ of cardinality $|E|\le\kappa$ in $X$ and let $x\in X\setminus F$ be a point. The $\kappa$-boundedness of $Y$ ensures that the closure $\overline E$ of $E$ in $Y$ is compact and so is the closure $\overline F$ of $F\subset\overline E$ in $Y$.
Since $F=X\cap \overline F$ and $x\in X\setminus F$, $x\not\in\overline F$ and so  by
the Hausdorff property of $Y$ there exist two disjoint open sets $V,U\subset Y$ such that $x\in V$
and $\overline F\subset U$. Then $V\cap X$ and $U\cap X$ are two disjoint open sets in $X$ such
that $x\in V\cap X$ and $F\subset U\cap X$, which means that the space $X$ is
$\overline{\kappa}$-regular.
\end{proof}

Let us recall that a topological space $X$ is called {\em Urysohn} if any distinct points of $X$ have disjoint closed neighborhoods  in $X$.
Similarly as above one can prove the following facts.
\begin{proposition}\label{p:cn1} Each $\kappa$-bounded Urysohn space $X$ is strongly $\overline{\kappa}$-normal.
\end{proposition}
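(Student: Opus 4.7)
The plan is to mimic the argument of Proposition~\ref{p:cn}, replacing Hausdorffness with the Urysohn property. As in that proof, let $\F$ be the family of closed subsets of closures of $\le\kappa$-sized subsets of $X$. Given disjoint $A,B\in\F$, the $\kappa$-boundedness of $X$ makes $A$ and $B$ closed subsets of compact sets, hence compact. So the task reduces to the following standard statement: in a Urysohn space, any two disjoint compact sets $A,B$ admit open neighborhoods $U\supset A$ and $V\supset B$ with $\overline U\cap\overline V=\emptyset$.

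To prove that statement, I would first fix $a\in A$ and, using the Urysohn property at the pair $(a,b)$ for each $b\in B$, choose open sets $U_{a,b}\ni a$ and $V_{a,b}\ni b$ with $\overline{U_{a,b}}\cap\overline{V_{a,b}}=\emptyset$. Compactness of $B$ extracts a finite subcover $V_{a,b_1},\dots,V_{a,b_n}$; set
$$W_a=\bigcap_{i\le n}U_{a,b_i},\qquad V_a=\bigcup_{i\le n}V_{a,b_i}.$$
Since the union is finite, $\overline{V_a}=\bigcup_i\overline{V_{a,b_i}}$, and $\overline{W_a}\subset\overline{U_{a,b_i}}$ for every $i$, so $\overline{W_a}\cap\overline{V_a}\subset\bigcup_i(\overline{U_{a,b_i}}\cap\overline{V_{a,b_i}})=\emptyset$. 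Thus $W_a$ is an open neighborhood of $a$ and $V_a$ is an open neighborhood of $B$ with disjoint closures.

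Now compactness of $A$ applied to the cover $\{W_a:a\in A\}$ yields a finite subcover $W_{a_1},\dots,W_{a_m}$. Setting
$$U=\bigcup_{j\le m}W_{a_j},\qquad V=\bigcap_{j\le m}V_{a_j},$$
we again have $\overline U=\bigcup_j\overline{W_{a_j}}$ (finite union) and $\overline V\subset\overline{V_{a_j}}$ for each $j$, whence
$$\overline U\cap\overline V=\bigcup_{j\le m}\bigl(\overline{W_{a_j}}\cap\overline V\bigr)\subset\bigcup_{j\le m}\bigl(\overline{W_{a_j}}\cap\overline{V_{a_j}}\bigr)=\emptyset.$$
This gives the required disjoint closed neighborhoods of $A$ and $B$, confirming the strong $\overline\kappa$-normality of $X$.

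There is no real obstacle here: the only subtlety is to invoke compactness twice (once to ``absorb'' $B$ for a fixed point of $A$, and then to ``absorb'' $A$), and each time to rely on the finite union of closures equalling the closure of the union, which is what lets the Urysohn separations survive the two limit operations.
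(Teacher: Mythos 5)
Your proof is correct and follows exactly the route the paper intends: the paper only says ``similarly as above,'' meaning one reduces, as in Proposition~\ref{p:cn}, to two disjoint compact sets via $\kappa$-boundedness and then separates them by open sets with disjoint closures using the Urysohn property. Your two-step compactness argument (first absorbing $B$ for each fixed $a\in A$, then absorbing $A$), relying on the closure of a finite union being the union of the closures, is the standard and correct way to fill in that omitted lemma.
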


\begin{proposition}\label{c11} Each subspace $X$ of a $\kappa$-bounded Urysohn space $Y$ is strongly $\overline{\kappa}$-regular.
\end{proposition}

\begin{proposition}\label{p:cn2} Each $\kappa$-bounded regular space $X$ is totally $\overline{\kappa}$-normal.
\end{proposition}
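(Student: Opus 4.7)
The plan is to reduce this statement to Proposition~\ref{p:rn2} via two short observations, so the proof should be essentially a one-liner.

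First I would unpack the family $\F$ appearing in the definition of total $\overline{\kappa}$-normality: it consists of closed subsets of sets of the form $\overline{E}$ with $|E|\le\kappa$. The $\kappa$-boundedness of $X$ tells us each such $\overline{E}$ is compact, hence every $F\in\F$, being a closed subset of a compact set, is itself compact.

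Second, I would note that any compact subspace $F$ of any topological space $X$ is automatically paracompact in $X$ in the sense defined before Proposition~\ref{cp}: given any open cover $\U$ of $F$ by open subsets of $X$, compactness of $F$ produces a finite subcover $\V\subseteq\U$, and a finite family of open sets is trivially locally finite in $X$.

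Combining these two observations, $\F$ is a subfamily of the family of closed subsets of $X$ that are paracompact in $X$. Proposition~\ref{p:rn2} says that a regular space is totally $\F'$-normal with respect to the full family $\F'$ of closed paracompact-in-$X$ subsets; restricting to the subfamily $\F$ immediately gives that $X$ is totally $\overline{\kappa}$-normal. I do not expect any real obstacle here: the only thing to double-check is that the definition of ``paracompact in $X$'' from the earlier text indeed applies to arbitrary open covers (so that compactness genuinely implies paracompactness-in-$X$), which is the case as written.
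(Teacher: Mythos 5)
Your proposal is correct: every member of the relevant family $\F$ is a closed subset of a compact set $\overline{E}$ (by $\kappa$-boundedness), hence compact, and a compact subset is paracompact in $X$ in the sense defined before Proposition~\ref{cp}, since a finite subcover is a locally finite family of open sets refining the given cover; monotonicity of total $\F$-normality in $\F$ then lets you quote Proposition~\ref{p:rn2}. The paper itself gives no written proof (it only remarks that the statement is proved ``similarly as above''), and the intended argument is the direct analogue of Proposition~\ref{p:cn}: the set $A\in\F$ is compact, $B$ is closed and disjoint from $A$, and in a regular space a compact set and a disjoint closed set have disjoint open neighborhoods. Your route through Proposition~\ref{p:rn2} is essentially the same mathematics in different packaging --- when $A$ is compact, the locally finite family in the proof of Proposition~\ref{p:rn2} can be taken finite, and the argument collapses to the direct one --- but it has the small advantage of deriving the statement formally from results already proved, and it makes transparent why the $\kappa$-bounded case is subsumed by the hypothesis of Theorem~\ref{t:main2} (paracompactness in $X$ of the relevant closed subspaces). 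The direct argument, on the other hand, is self-contained and parallels Propositions~\ref{p:cn} and~\ref{p:cn1}, which is presumably why the authors phrased it as they did.
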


We recall that the {\em density} $d(X)$ of a topological space $X$ is the smallest cardinality of a dense subset in $X$.

\begin{proposition}\label{p:Tych} Each subspace $X$ of density $d(X)\le\kappa$ in a $\kappa$-bounded Hausdorff space $Y$ is Tychonoff.
\end{proposition}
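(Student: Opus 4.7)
The plan is to reduce the problem to the classical fact that subspaces of compact Hausdorff spaces are Tychonoff. The key observation is that the density hypothesis combined with $\kappa$-boundedness forces $X$ to lie inside a compact subspace of $Y$.

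Concretely, I would pick a dense subset $D\subset X$ with $|D|\le\kappa$, which exists by the hypothesis $d(X)\le\kappa$. Since $|D|\le\kappa$ and $Y$ is $\kappa$-bounded, the closure $K:=\overline{D}^{Y}$ is compact in $Y$. Because $D\subset X$ is dense in $X$, we have $X\subset \overline{D}^{X}\subset \overline{D}^{Y}=K$, so $X$ sits as a subspace of the compact Hausdorff space $K$.

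Now $K$ is compact Hausdorff (as a closed subspace of the Hausdorff space $Y$ and compact by $\kappa$-boundedness), hence $K$ is Tychonoff by the Urysohn lemma applied in compact Hausdorff spaces. Since the Tychonoff property is hereditary, $X\subset K$ is Tychonoff as well.

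The only ``obstacle'' worth mentioning is the trivial verification that $X$ really lies inside $K$; this uses nothing more than $\overline{D}^X\subset \overline{D}^Y$ together with the density of $D$ in $X$. No Lindel\"of, paracompactness, or normality considerations from the earlier propositions are needed here.
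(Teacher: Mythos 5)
Your proposal is correct and follows essentially the same argument as the paper: take a dense set $D\subset X$ of cardinality $\le\kappa$, note that its closure in $Y$ is compact Hausdorff (hence Tychonoff) by $\kappa$-boundedness, and conclude by heredity of the Tychonoff property since $X\subset\overline{D}$.
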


\begin{proof} Let $D$ be a dense subset of $X$ with $|D|=d(X)\le\kappa$. By definition of a $\kappa$-bounded space, the closure $\overline{D}$ of $D$ in $Y$ is compact and being Hausdorff is Tychonoff. Then $X\subset\overline{D}$ is Tychonoff, too.
\end{proof}

\section{The Wallman $\kappa$-bounded extension of a topological space}\label{s:W}

We recall \cite[\S3.6]{Eng} that the Wallman extension $\Wallman X$ of a topological space $X$ consists of closed ultrafilters, i.e., families $\U$ of closed subsets of $X$ satisfying the following conditions:
\begin{itemize}
\item $\emptyset\notin\U$;
\item $A\cap B\in\U$ for any $A,B\in\U$;
\item a closed set $F\subset X$ belongs to $\mathcal U$ if $F\cap A\ne\emptyset$ for every $A\in\U$.
\end{itemize}
The Wallman extension $\Wallman X$ of $X$ carries the topology generated by the base consisting of the sets
$$\langle U\rangle=\{\F\in \Wallman X:\exists F\in\F\;\;(F\subset U)\}$$ where $U$ runs over open subsets of $X$.

By (the proof of) Theorem~\cite[3.6.21]{Eng}, the Wallman extension $\Wallman X$ is compact. By Theorem~\cite[3.6.22]{Eng} a $T_1$-space $X$ is normal if and only if its Wallman extension $\Wallman X$ is Hausdorff. 

If $X$ is a $T_1$-space, then we can consider the map $j_X:X\to \Wallman X$ assigning to each point $x\in X$ the principal closed ultrafilter consisting of all closed sets $F\subset X$ containing the point $x$. It is easy to see that the image $j_X(X)$ is dense in $\Wallman X$. By \cite[3.6.21]{Eng}, the map $j_X:X\to \Wallman X$ is a topological embedding. 

The following lemma can be easily derived from the definition of a closed ultrafilter and should be known.

\begin{lemma}\label{W} For a subset $A$ of a $T_1$-space $X$, a closed ultrafilter $\mathcal{F}\in \Wallman X$ belongs to $\overline{j_X(A)}\subset \Wallman X$ if and only if $\overline{A}\in\F$.
\end{lemma}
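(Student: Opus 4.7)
The plan is to unpack both directions directly from the definition of the Wallman base $\langle U\rangle$ and the three closed-ultrafilter axioms. The central observation to pin down first is a characterization of when a principal ultrafilter $j_X(a)$ lies in a basic open set: because $X$ is $T_1$, the singleton $\{a\}$ is closed and is the smallest member of $j_X(a)$; therefore $j_X(a)\in\langle U\rangle$ iff some closed set of $j_X(a)$ is contained in $U$, iff $\{a\}\subset U$, iff $a\in U$. Once this is recorded, each direction is a short argument.

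For the implication $\overline A\in\F\Rightarrow\F\in\overline{j_X(A)}$, I would take an arbitrary basic open neighborhood $\langle U\rangle$ of $\F$, witnessed by some $F\in\F$ with $F\subset U$. Because $\F$ is closed under finite intersections and does not contain $\emptyset$, the set $\overline A\cap F$ is nonempty and lies in $U$. Since $U$ is open and meets $\overline A$, it must meet $A$ at some point $a$; then by the preliminary observation $j_X(a)\in\langle U\rangle\cap j_X(A)$, so $\F$ is in the closure of $j_X(A)$.

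For the converse I would argue by contraposition. Suppose $\overline A\notin\F$. By the maximality axiom for closed ultrafilters (a closed set belongs to $\F$ iff it meets every member of $\F$), some $G\in\F$ is disjoint from $\overline A$. Setting $U:=X\setminus\overline A$, which is open, we obtain $G\subset U$ and hence $\F\in\langle U\rangle$. On the other hand, every $a\in A$ lies in $\overline A$, so $a\notin U$, and by the preliminary observation $j_X(a)\notin\langle U\rangle$. Thus $\langle U\rangle$ is an open neighborhood of $\F$ disjoint from $j_X(A)$, which yields $\F\notin\overline{j_X(A)}$.

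The lemma is essentially a definitional unwinding, so I do not expect any substantive obstacle; the only delicate point is to remember to invoke the finite-intersection/filter axioms in the ``$\Leftarrow$'' direction and the maximality axiom in the ``$\Rightarrow$'' direction, and to exploit the $T_1$ hypothesis through the fact that $\{a\}\in j_X(a)$ is the minimal element that governs membership in every basic open set $\langle U\rangle$.
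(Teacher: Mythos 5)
Your proof is correct: the preliminary observation that $j_X(a)\in\langle U\rangle$ iff $a\in U$ (using $T_1$ via closedness of $\{a\}$), the filter axioms in the ``$\Leftarrow$'' direction, and the maximality axiom producing a member of $\F$ disjoint from $\overline{A}$ in the ``$\Rightarrow$'' direction are exactly the intended definitional unwinding. The paper itself omits the proof, remarking only that the lemma ``can be easily derived from the definition of a closed ultrafilter,'' so your argument supplies precisely the routine verification the authors had in mind.
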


Given an infinite cardinal $\kappa$, in the Wallman extension $\Wallman X$ of a $T_1$-space $X$, consider the subspace $$\Wk X={\textstyle\bigcup}\{\overline{j_X(C)}:C\subset X,\;|C|\le\kappa\}$$of $\Wallman X$. The space $\Wk X$ will be called the {\em Wallman $\kappa$-bounded extension} of $X$.

The following proposition justifies the choice of terminology.

\begin{proposition}\label{t:w} For any topological space $X$, the space $\Wk X$ is $\kappa$-bounded.
\end{proposition}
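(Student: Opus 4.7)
The plan is to reduce compactness of closures in $\Wk X$ to the compactness of $\Wallman X$ (from \cite[3.6.21]{Eng}), using the fact that the pieces $\overline{j_X(C)}$ with $|C|\le\kappa$ are closed in $\Wallman X$ and hence compact.

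First I would fix a subset $S\subset\Wk X$ with $|S|\le\kappa$ and, for each $\F\in S$, choose a set $C_\F\subset X$ with $|C_\F|\le\kappa$ such that $\F\in\overline{j_X(C_\F)}$; such a set exists by the very definition of $\Wk X$ as $\bigcup\{\overline{j_X(C)}:C\subset X,\ |C|\le\kappa\}$. Then I would form the union
$$C=\bigcup_{\F\in S}C_\F,$$
which satisfies $|C|\le\kappa\cdot\kappa=\kappa$ since $\kappa$ is infinite.

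Next I would observe that the monotonicity of closure gives $\overline{j_X(C_\F)}\subset\overline{j_X(C)}$ for every $\F\in S$, so $S\subset\overline{j_X(C)}$. The set $\overline{j_X(C)}$ is closed in the compact space $\Wallman X$, hence compact; moreover, by the defining formula for $\Wk X$ it is contained in $\Wk X$.

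Finally I would note that the closure of $S$ in $\Wk X$ equals $\overline{S}^{\Wallman X}\cap\Wk X$, and since $\overline{S}^{\Wallman X}\subset\overline{j_X(C)}\subset\Wk X$, this closure simply coincides with $\overline{S}^{\Wallman X}$, which is a closed subset of the compact set $\overline{j_X(C)}$ and therefore compact. I do not expect any genuine obstacle here: the proof is essentially bookkeeping, with the only substantive ingredient being the compactness of $\Wallman X$ and the cardinal arithmetic $\kappa\cdot\kappa=\kappa$.
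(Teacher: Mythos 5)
Your proof is correct and follows essentially the same route as the paper: choose a set $C_u$ of size $\le\kappa$ for each point of the given $\le\kappa$-sized subset, take the union $C$, and use compactness of $\overline{j_X(C)}$ in the compact space $\Wallman X$ to conclude that the closure of the subset in $\Wk X$ is compact. The extra bookkeeping you include (monotonicity of closures, identifying the closure in $\Wk X$ with the closure in $\Wallman X$) is exactly what the paper's shorter argument implicitly uses.
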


\begin{proof} We should prove that for any subset $\Omega\subset \Wk X$ of cardinality $|\Omega|\le\kappa$, the closure $\overline{\Omega}$ is compact. By the definition of $\Wk X$, for every ultrafilter $u\in\Omega$ there exists a  set $C_u\subset X$ such that $|C_u|\le\kappa$ and $u\in\overline{j_X(C_u)}$. Consider the set $C=\bigcup_{u\in\Omega}C_u$ and observe that $|C|\le\kappa$ and the closure $\overline{j_X(C)}$ in $\Wallman X$ is compact (by the compactness of $\Wallman X$). Then the closure $\overline{\Omega}$ of $\Omega$ in $\Wk X$ coincides with the closure of $\Omega$ in $\overline{j_X(C)}$ and hence is compact.
\end{proof}

The following proposition characterizes some separation properties of the Wallman $\kappa$-bounded extension $\Wk X$ of a $T_1$-space.

\begin{proposition}\label{p:WH} For a $T_1$-space $X$ the following statements hold:
 \begin{itemize}
 \item[1)]  $\Wk X$ is Hausdorff iff $X$ is $\overline{\kappa}$-normal;
 \item[2)] $\Wk X$  is Urysohn iff $X$ is strongly $\overline{\kappa}$-normal;
 \item[3)] $\Wk X$ is regular iff $X$ is totally $\overline{\kappa}$-normal.
  \end{itemize}
\end{proposition}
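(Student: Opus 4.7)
My plan is to prove all three equivalences in parallel, relying on two structural observations about the Wallman topology. By Lemma~\ref{W}, a closed ultrafilter $\mathcal{H}$ lies in $\overline{j_X(A)}^{\Wallman X}$ iff $\overline{A}\in\mathcal{H}$. Moreover, for any open $U\subset X$, the basic opens $\langle U\rangle$ and $\langle X\setminus\overline{U}\rangle$ are disjoint in $\Wallman X$ (a common $\mathcal{H}$ would contain $F_1,F_2$ with $F_1\cap F_2\subset U\cap(X\setminus\overline{U})=\emptyset$), so every $\mathcal{H}\in\overline{\langle U\rangle}^{\Wallman X}$ avoids $\langle X\setminus\overline{U}\rangle$, i.e., has no member contained in $X\setminus\overline{U}$, which by the third ultrafilter axiom forces $\overline{U}\in\mathcal{H}$.

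For the sufficiency in (1), given distinct $\mathcal{F},\mathcal{H}\in\Wk X$, pick witnesses $\overline{C_\mathcal{F}}\in\mathcal{F}$, $\overline{C_\mathcal{H}}\in\mathcal{H}$ of cardinality $\le\kappa$; by ultrafilter maximality there exist disjoint $F\in\mathcal{F}$, $G\in\mathcal{H}$, and the sets $A:=F\cap\overline{C_\mathcal{F}}$, $B:=G\cap\overline{C_\mathcal{H}}$ are disjoint members of $\F$. Then $\overline{\kappa}$-normality yields disjoint opens $U\supset A$, $V\supset B$ in $X$, whence $\langle U\rangle,\langle V\rangle$ are disjoint basic neighborhoods separating $\mathcal{F}$ from $\mathcal{H}$ in $\Wk X$. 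Case (2) follows the same outline with strong $\overline{\kappa}$-normality arranging $\overline{U}\cap\overline{V}=\emptyset$; the second observation then gives $\overline{\langle U\rangle}\cap\overline{\langle V\rangle}=\emptyset$. For the sufficiency in (3), given $\mathcal{F}\in\Wk X$ and a basic neighborhood $\langle U\rangle\ni\mathcal{F}$, extract some $A\in\F$ with $A\subset U$ and $A\in\mathcal{F}$ as above; total $\overline{\kappa}$-normality applied to the disjoint pair $A\in\F$ and the \emph{arbitrary} closed set $X\setminus U$ gives disjoint opens $V\supset A$, $W\supset X\setminus U$. Since $\overline{V}\subset X\setminus W\subset U$, the second observation yields $\mathcal{F}\in\langle V\rangle\cap\Wk X\subset\overline{\langle V\rangle}^{\Wk X}\subset\langle U\rangle\cap\Wk X$, establishing regularity (given that (1) already yields Hausdorffness).

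For necessity, given disjoint $A,B\in\F$ (or $A\in\F$ and $B\subset X$ closed for (3)), the sets $\overline{j_X(A)}$ and $\overline{j_X(B)}^{\Wk X}$ are disjoint closed subsets of $\Wk X$: a common $\mathcal{H}$ would contain both $A$ and $B$, forcing $A\cap B=\emptyset\in\mathcal{H}$. Moreover $\overline{j_X(A)}$ is compact, being a closed subset of the compact $\overline{j_X(C_A)}\subset\Wallman X$. For (1) and (2), $\overline{j_X(B)}$ is likewise compact, so the Hausdorff (resp.\ Urysohn) property of $\Wk X$ separates the two compacta by disjoint opens (resp.\ opens with disjoint closures), which pull back along $j_X$ to the desired opens in $X$. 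For (3), the classical fact that in any regular space a compact set and a disjoint closed set admit disjoint open neighborhoods applies to $\overline{j_X(A)}$ and $\overline{j_X(B)}^{\Wk X}$, again pulling back to $X$.

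The main subtlety lies in (3): because $B$ need not lie in $\F$, one cannot treat it symmetrically with $A$. In the necessity direction this forces the compact-versus-closed separation argument in a regular space; in the sufficiency direction the same asymmetry is exactly why the \emph{total} form of $\overline{\kappa}$-normality, rather than the ordinary form, is needed. The only remaining bookkeeping is checking that closure in $\Wk X$ agrees with closure in $\Wallman X$ on the sets $\overline{j_X(A)}$ for $A\in\F$, which is immediate since such a closure already lies inside $\Wk X$.
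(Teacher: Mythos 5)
Your proposal is correct and follows essentially the same route as the paper's proof: Lemma~\ref{W} plus the observation that $w\in\overline{\langle U\rangle}$ forces $\overline U\in w$ for the sufficiency directions, and compactness of $\overline{j_X(A)}\subset\overline{j_X(C_A)}\subset\Wk X$ together with pulling separations back along $j_X$ for the necessity directions. The only differences (using two cardinality-$\le\kappa$ witnesses instead of their union, and isolating the closure observation as a preliminary remark) are cosmetic.
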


\begin{proof} 1. To prove the ``if'' part of the statement 1), assume that $X$ is $\overline{\kappa}$-normal. Given any distinct closed ultrafilters $u,v\in \Wk X$, use the maximality of $u,v$ and find two disjoint closed sets $F\in u$ and $E\in v$. By definition of $\Wk X$, there exists a subset $C\subset X$ such that $|C|\le\kappa$ and  $u,v\in\overline{j_X(C)}$. By Lemma~\ref{W}, $\overline{C}\in u\cap v$. By the $\overline{\kappa}$-normality of $X$, the disjoint closed sets $F\cap \overline C\in u$ and $E\cap \overline C\in v$ have disjoint open neighborhoods $U$ and $V$ in $X$, respectively. Then $\langle U\rangle$ and $\langle V\rangle$ are disjoints neighborhoods of the ultrafilters $u$ and $v$ in $\Wallman X$, witnessing that the space $\Wk X$ is Hausdorff.

To prove the ``only if'' part, assume that the space $\Wk X$ is Hausdorff. By Proposition~\ref{t:w}, the space $\Wk X$ is $\kappa$-bounded.
To show that the space $X$  is $\overline{\kappa}$-normal, take any subset $C\subset X$ of cardinality $|C|\le\kappa$ and two disjoint closed subsets $F,E$ of $\overline C$. Lemma~\ref{W} implies that $\overline{j_X(F)}\cap \overline{j_X(E)}=\emptyset$. Since $\overline{j_X(F)}\cup \overline{j_X(E)}\subset\overline{j_X(C)}$ and $|C|\leq \kappa$, the sets $\overline{j_X(F)}$ and $\overline{j_X(E)}$ are compact and by the Hausdorffness of $\Wk X$, these compact sets have disjoint open neighborhoods $U$ and $V$ in $\Wk X$. Then $j_X^{-1}(U)$ and $j_X^{-1}(V)$ are disjoint neighborhoods of the sets $F$ and $E$ in $X$, witnessing that the space  $X$ is $\overline{\kappa}$-normal.
\smallskip

2. To prove the ``if'' part of the statement 2), assume that the space $X$ is strongly $\overline{\kappa}$-normal. Given any distinct closed ultrafilters $u,v\in \Wk X$, use the maximality of $u,v$ and find two disjoint closed sets $F\in u$ and $E\in v$. By definition of $\Wk X$, there exists a subset $C\subset X$ such that $|C|\le\kappa$ and  $u,v\in\overline{j_X(C)}$. By Lemma~\ref{W}, $\overline C\in u\cap v$. By the strong $\overline{\kappa}$-normality of $X$, the disjoint closed sets $F\cap \overline{C}\in u$ and $E\cap \overline{C}\in v$ have open neighborhoods $U$ and $V$ in $X$ such that $\overline U\cap \overline V=\emptyset$. Then $\langle U\rangle$ and $\langle V\rangle$ are disjoints open neighborhoods of the ultrafilters $u$ and $v$ in $\Wallman X$. We claim that $\overline{\langle U\rangle}\cap \overline{\langle V\rangle}=\emptyset$. Indeed, given any closed ultrafilter $w\in \Wallman X$, we conclude that either $\overline U\notin w$ or $\overline V\notin w$.
If $\overline U\notin w$, then by the maximality of $w$, the closed set $\overline{U}$ is disjoint with some set in $w$ and then $\langle X\setminus \overline{U}\rangle$ is a neighborhood of $w$, disjoint with $\langle U\rangle$. If $\overline V\notin w$, then $\langle X\setminus\overline V\rangle$ is a neighborhood of $w$ that is disjoint with $\langle V\rangle$. In both cases we obtain that $w\notin \overline{\langle U\rangle}\cap   \overline{\langle V\rangle}$, which implies $ \overline{\langle U\rangle}\cap   \overline{\langle V\rangle}=\emptyset$ and witnesses that the space $\Wk X$ is Urysohn.

To prove the ``only if'' part, assume that the space $\Wk X$ is Urysohn. By Proposition~\ref{t:w}, the space $\Wk X$ is $\kappa$-bounded.
To show that the space $X$ is strongly $\overline{\kappa}$-normal, take any subset $C\subset X$ of cardinality $|C|\le\kappa$ and two disjoint closed subsets $F,E$ of $\overline C\subset X$. Lemma~\ref{W} implies that $\overline{j_X(F)}\cap \overline{j_X(E)}=\emptyset$.
Since $\overline{j_X(F)}\cup\overline{j_X(E)}\subset \overline{j_X(C)}$ and $|C|\leq \kappa$, the sets $\overline{j_X(F)}$ and $\overline{j_X(E)}$ are compact. Since the space $\Wk X$ is Urysohn the compact sets $\overline{j_X(F)}$ and $\overline{j_X(E)}$ have open neighborhoods $U$ and $V$ with disjoint closures in $\Wk X$. Then $j_X^{-1}(U)$ and $j_X^{-1}(V)$ are open neighborhoods with disjoint closures of the sets $F$ and $E$ in $X$, respectively.
\smallskip

3. To prove the ``if'' part of the statement 3), assume that the space $X$ is totally $\overline{\kappa}$-normal. Given any closed ultrafilter $u\in \Wk X$ and a basic open neighborhood $\langle U\rangle$ of $u$ in $\Wk X$, find a closed set $F\in u$ such that $F\subset U$. Since $u\in\Wk X$, there exists a subset $C\subset X$ such that $|C|\le\kappa$ and $u\in\overline{j_X(C)}$. By Lemma~\ref{W}, $\overline{C}\in u$. Replacing the set $F$ by $F\cap\overline C$, we can assume that $F\subset \overline C$. By the total $\overline{\kappa}$-normality of $X$, there exists an open neighborhood $V$ of $F$ in $X$ such that $\overline{V}\subset U$. Using Lemma~\ref{W}, we can show that $u\in\langle V\rangle\subset \overline{\langle V\rangle}\subset \langle U\rangle$,  witnessing the regularity of the space $\Wk X$.

To prove the ``only if'' part, assume that the space $\Wk X$ is regular. By Proposition~\ref{t:w}, the space $\Wk X$ is $\kappa$-bounded.
To show that the space $X$ is totally $\overline{\kappa}$-normal, take any subset $C\subset X$ of cardinality $|C|\le\kappa$ and two disjoint closed subsets $F,E$ of $X$ such that $F\subset \overline{C}$. Lemma~\ref{W} implies that $\overline{j_X(F)}\cap\overline{j_X(E)}=\emptyset$.
Since $\overline{j_X(F)}\subset \overline{j_X(C)}$ and $|C|\leq \kappa$, the set $\overline{j_X(F)}$ is compact. By the regularity of $\Wk X$, the sets $\overline{j_X(F)}$ and $\overline{j_X(E)}$ have disjoint open neighborhoods $U$ and $V$ in $\Wk X$. Then $j_X^{-1}(U)$ and $j_X^{-1}(V)$ are disjoint open neighborhood of the sets $F$ and $E$ in $X$, respectively. Hence $X$ is totally $\overline{\kappa}$-normal.
\end{proof}

The following three theorems give a partial answer to Problem~\ref{prob1} and are the main results of this paper.

\begin{theorem}\label{t:main} For an infinite cardinal $\kappa$ and a $T_1$-space $X$ consider the conditions:
\begin{enumerate}
\item the space $X$ is $\overline\kappa$-normal;
\item the Wallman $\kappa$-bounded extension $W_{\bar\kappa} X$ of $X$ is Hausdorff;
\item $X$ is homeomorphic to a subspace of a Hausdorff $\kappa$-bounded space;
\item the space $X$ is  $\overline\kappa$-regular.
\end{enumerate}
Then $(1)\Leftrightarrow(2)\Ra(3)\Ra(4)$. If each closed subspace of  density $\le\kappa$ in $X$ is Lindel\"of, then $(4)\Ra(1)$ and hence the conditions \textup{(1)--(4)} are equivalent.
\end{theorem}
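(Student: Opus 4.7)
The plan is that most of the chain follows by reassembling results already proved in the paper. The equivalence $(1)\Leftrightarrow(2)$ is precisely item (1) of Proposition~\ref{p:WH}. For $(2)\Ra(3)$, I would point out that the canonical embedding $j_X\colon X\to\Wallman X$ actually lands inside $\Wk X$: for every $x\in X$ the singleton $\{x\}$ has cardinality $\le\kappa$, and since $X$ is $T_1$ we have $\overline{\{x\}}=\{x\}\in j_X(x)$, so Lemma~\ref{W} gives $j_X(x)\in\overline{j_X(\{x\})}\subset\Wk X$. Combining with Proposition~\ref{t:w} and the assumption of (2), the corestriction $j_X\colon X\to\Wk X$ realises $X$ as a subspace of a Hausdorff $\kappa$-bounded space. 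The implication $(3)\Ra(4)$ is then Corollary~\ref{c1}.

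The only step requiring genuine work is $(4)\Ra(1)$ under the additional assumption that every closed subspace of $X$ of density $\le\kappa$ is Lindel\"of. My plan is to reduce this to Proposition~\ref{p:rn}. Let $\F$ be the family of closed subsets of closures of $\le\kappa$-sized subsets of $X$, and fix disjoint $A,B\in\F$. Choose witnesses $C_A,C_B\subset X$ with $|C_A|,|C_B|\le\kappa$, $A\subset\overline{C_A}$ and $B\subset\overline{C_B}$, and set $C:=C_A\cup C_B$. Then $A,B\subset\overline{C}$ and $\overline{C}$ is a closed subspace of $X$ of density $\le\kappa$, witnessed by $C$; the hypothesis therefore forces $\overline{C}$ to be Lindel\"of, so the closed subsets $A,B$ of $\overline{C}$ are themselves Lindel\"of.

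To finish, I would apply Proposition~\ref{p:rn} to the subfamily $\F'\subset\F$ consisting of those members of $\F$ that are Lindel\"of. Since $X$ is $\F$-regular it is a fortiori $\F'$-regular, and $\F'$ manifestly consists of closed Lindel\"of subspaces, so Proposition~\ref{p:rn} delivers $\F'$-normality. Because $A,B\in\F'$ by the previous paragraph, this produces the desired disjoint open neighbourhoods and establishes $\overline\kappa$-normality of $X$. I do not anticipate any real obstacle, as the whole argument is essentially a repackaging of Propositions~\ref{p:WH}, \ref{t:w} and~\ref{p:rn} together with Corollary~\ref{c1}. The one subtlety worth noting is the mismatch between Proposition~\ref{p:rn}, which requires \emph{all} members of the family to be Lindel\"of, and our setting, where an arbitrary $F\in\F$ need not be; the passage to $\F'$ above is precisely what resolves this gap.
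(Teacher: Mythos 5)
Your proposal is correct and follows essentially the same route as the paper: $(1)\Leftrightarrow(2)$ by Proposition~\ref{p:WH}(1), $(2)\Ra(3)$ via Proposition~\ref{t:w} and the embedding $j_X:X\to\Wk X$, $(3)\Ra(4)$ by Corollary~\ref{c1}, and $(4)\Ra(1)$ by Proposition~\ref{p:rn}. The detour through the subfamily $\F'$ is harmless but unnecessary: under the hypothesis every member of $\F$ is already Lindel\"of, being a closed subset of some $\overline{C}$ with $|C|\le\kappa$, which is a closed subspace of density $\le\kappa$ and hence Lindel\"of.
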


\begin{proof} The equivalence $(1)\Leftrightarrow(2)$ was proved in Proposition~\ref{p:WH}(1) and $(2)\Ra(3)$ follows immediately from Proposition~\ref{t:w} and the fact that the canonical map $j_X:X\to \Wk X$ is a topological embedding. 
The implication $(3)\Ra(4)$  follows from Proposition~\ref{c1}.
If  each closed subspace of density $\le\kappa$ in $X$ is Lindel\"of, then $(4)\Ra(1)$ by  Proposition~\ref{p:rn}.
 \end{proof}

\begin{theorem}\label{t:main1} For an infinite cardinal $\kappa$ and a $T_1$-space $X$ consider the conditions:
\begin{enumerate}
\item the space $X$ is strongly $\overline\kappa$-normal;
\item the Wallman $\kappa$-bounded extension $W_{\overline\kappa} X$ of $X$ is Urysohn;
\item $X$ is homeomorphic to a subspace of a Urysohn $\kappa$-bounded space;
\item $X$ is strongly $\overline\kappa$-regular.
\end{enumerate}
Then $(1)\Leftrightarrow(2)\Ra(3)\Ra(4)$. If each closed subspace of  density $\le\kappa$ in $X$ is countably paracompact in $X$ and Lindel\"of, then $(4)\Ra(1)$ and hence the conditions \textup{(1)--(4)} are equivalent.
\end{theorem}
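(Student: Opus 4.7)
The plan is to follow the template of the proof of Theorem~\ref{t:main}, substituting the Urysohn/strong variants of the separation properties at each step. The equivalence $(1)\Leftrightarrow(2)$ is already established in Proposition~\ref{p:WH}(2), so I would just cite it. For $(2)\Ra(3)$, Proposition~\ref{t:w} asserts that $\Wk X$ is $\kappa$-bounded for every $T_1$-space $X$, and since the canonical map $j_X\colon X\to \Wk X$ is a topological embedding, condition $(2)$ places $X$ as a subspace of the Urysohn $\kappa$-bounded space $\Wk X$. The implication $(3)\Ra(4)$ is then precisely Corollary~\ref{c11}.

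The only substantive step is $(4)\Ra(1)$ under the additional hypothesis. I would apply Proposition~\ref{p:rn1} to the family $\F$ of all closed subsets of closures $\overline C$, where $C\subset X$ ranges over subsets of cardinality $\le\kappa$. Any such closure $\overline C$ is a closed subspace of $X$ of density $\le\kappa$, so by hypothesis it is Lindel\"of and countably paracompact in $X$. A closed subset $F$ of $\overline C$ is then itself Lindel\"of (as a closed subspace of a Lindel\"of space) and countably paracompact in $X$ by Proposition~\ref{cp}. Thus $\F$ consists of closed Lindel\"of sets that are countably paracompact in $X$, and the strong $\overline\kappa$-regularity of $X$ is by definition the strong $\F$-regularity. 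Proposition~\ref{p:rn1} then delivers the strong $\F$-normality of $X$, which is exactly the strong $\overline\kappa$-normality.

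The one point that requires care is that the family $\F$ appearing in the hypothesis of Proposition~\ref{p:rn1} match the family from the definition of strong $\overline\kappa$-regularity/normality; the description above makes this identification transparent. Beyond this bookkeeping I anticipate no obstacle, since the entire argument is a direct assembly of Propositions~\ref{p:WH}, \ref{t:w}, \ref{p:rn1}, \ref{cp} and Corollary~\ref{c11}.
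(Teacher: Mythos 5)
Your proposal is correct and follows essentially the same route as the paper: citing Proposition~\ref{p:WH}(2) for $(1)\Leftrightarrow(2)$, Proposition~\ref{t:w} together with the embedding $j_X$ for $(2)\Ra(3)$, Corollary~\ref{c11} for $(3)\Ra(4)$, and Propositions~\ref{cp} and \ref{p:rn1} for $(4)\Ra(1)$ under the extra hypothesis. Your explicit verification that the family $\F$ of closed subsets of closures of $\le\kappa$-sized sets satisfies the hypotheses of Proposition~\ref{p:rn1} is exactly the bookkeeping the paper leaves implicit.
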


\begin{proof} The equivalence $(1)\Leftrightarrow(2)$ was proved in Proposition~\ref{p:WH}(2) and $(2)\Ra(3)$ follows immediately from Proposition~\ref{t:w} and the fact that the canonical map $j_X:X\to \Wk X$ is a topological embedding. 
The implication $(3)\Ra(4)$  follows from Proposition~\ref{c11}.
If each closed subspace of density $\le\kappa$ in $X$ is countably paracompact in $X$ and Lindel\"of, then $(4)\Ra(1)$ by Propositions~\ref{cp} and \ref{p:rn1}.
 \end{proof}

 \begin{theorem}\label{t:main2} For an infinite cardinal $\kappa$ and a $T_1$-space $X$ consider the conditions:
\begin{enumerate}
\item the space $X$ is totally $\overline\kappa$-normal;
\item the Wallman $\kappa$-bounded extension $W_{\overline\kappa} X$ of $X$ is regular;
\item $X$ is homeomorphic to a subspace of a regular $\kappa$-bounded space;
\item $X$ is regular.
\end{enumerate}
Then $(1)\Leftrightarrow(2)\Ra(3)\Ra(4)$. If each closed subspace of  density $\le\kappa$ in $X$ is paracompact in $X$, then $(4)\Ra(1)$ and hence the conditions \textup{(1)--(4)} are equivalent.
\end{theorem}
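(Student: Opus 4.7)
The proof will closely parallel that of Theorems~\ref{t:main} and~\ref{t:main1}, assembling previously established pieces. The plan is to address the four implications in order, noting that most of the work has already been done.

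First I would observe that the equivalence $(1)\Leftrightarrow(2)$ is precisely Proposition~\ref{p:WH}(3), so nothing further needs to be proved there. For $(2)\Ra(3)$, the canonical map $j_X:X\to \Wk X$ is a topological embedding by the general Wallman embedding theorem cited just before Lemma~\ref{W}, and Proposition~\ref{t:w} guarantees that $\Wk X$ is $\kappa$-bounded; thus if $\Wk X$ is in addition regular, then $X$ embeds as a subspace of a regular $\kappa$-bounded space.

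Next, for $(3)\Ra(4)$, I would simply invoke the standard (and immediate) fact that regularity is hereditary with respect to arbitrary subspaces: if $X$ embeds into a regular space $Y$, then for any closed $F\subset X$ and any $x\in X\setminus F$, a pair of disjoint open neighborhoods of $x$ and $\cl_Y(F)$ in $Y$ intersects $X$ in disjoint open neighborhoods of $x$ and $F$ in $X$. Together with the $T_1$ assumption this gives the regularity of $X$.

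Finally, for the conditional implication $(4)\Ra(1)$, assume the hypothesis that each closed subspace of density $\le\kappa$ in $X$ is paracompact in $X$. Let $\F$ be the family of closed subsets of $X$ which sit inside the closure $\overline C$ of some $C\subset X$ with $|C|\le\kappa$. Since $\overline C$ is a closed subspace of density $\le\kappa$, it is paracompact in $X$ by hypothesis; by Proposition~\ref{cp} every closed subset of $\overline C$ is also paracompact in $X$, so each member of $\F$ is paracompact in $X$. Now Proposition~\ref{p:rn2} applies to the regular space $X$ and yields that $X$ is totally $\F$-normal, which is exactly the definition of total $\overline\kappa$-normality. I do not expect any real obstacle here; the only subtlety to be careful with is confirming that ``paracompact in $X$'' passes from $\overline C$ to its closed subsets, which is precisely the content of Proposition~\ref{cp}.
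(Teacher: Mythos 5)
Your proposal is correct and follows essentially the same route as the paper: $(1)\Leftrightarrow(2)$ via Proposition~\ref{p:WH}(3), $(2)\Ra(3)$ via Proposition~\ref{t:w} and the embedding $j_X$, $(3)\Ra(4)$ by heredity of regularity (which the paper dismisses as trivial), and $(4)\Ra(1)$ by combining Propositions~\ref{cp} and~\ref{p:rn2} exactly as the paper does. Your only addition is spelling out the reduction from closed subsets of $\overline C$ with $|C|\le\kappa$ to the paracompactness hypothesis, which is a correct and welcome elaboration of the paper's terse citation.
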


\begin{proof} The equivalence $(1)\Leftrightarrow(2)$ was proved in Proposition~\ref{p:WH}(3), the implication $(2)\Ra(3)$ follows immediately from Proposition~\ref{t:w} and the fact that the canonical map $j_X:X\to \Wk X$ is a topological embedding, and $(3)\Ra(4)$ is trivial. 
If each closed subspace of density $\le\kappa$ in $X$ is paracompact in $X$, then $(4)\Ra(1)$ by Propositions~\ref{cp} and \ref{p:rn2}.
 \end{proof}

\begin{problem} Does each $\overline\kappa$-Tychonoff space embed into a Hausdorff $\kappa$-bounded space?
\end{problem}

\section{Some examples}\label{s:ex}
A topological space $X$ is {\em functionally Hausdorff\/} if for any distinct points $x,y\in X$ there exists a continuous function $f:X\to\IR$ such that $f(x)\ne f(y)$. 

First, we present an example of a first-countable regular space $M$ which is $\overline{\omega}$-normal but is neither functionally Hausdorff nor strongly $\overline{\omega}$-normal.
The space $M$ is a suitable modification of the famous example of Mysior \cite{Mys}.

Let $\IQ_1=\{y\in\IQ:0<y<1\}$ be the set of rational numbers in the interval $(0,1)$ and $$M=\{-\infty,+\infty\}\cup\IR\cup (\IR\times\IQ_1)$$where $-\infty,+\infty\notin\IR\cup(\IR\times\IQ_1)$ are two distinct points. The topology on the space $M$ is generated by the subbase $$
\big\{\{z\},M\setminus\{z\}:z\in \IR\times\IQ_1\big\}\cup\{V_x:x\in\IR\}\cup\{U_n:n\in\IZ\}\cup\{W_n,n\in\IZ\}$$where $$
\begin{aligned}
V_x&=\{x\}\cup\{(z,y)\in \IR\times\IQ_1:z\in\{x,x+y\}\}\mbox{ for $x\in\IR$},\\
U_n&=\{-\infty\}\cup\{x\in\IR:x<n\}\cup\{(x,y)\in\IR\times\IQ_1:x<n+1\}\mbox{ for $n\in\IZ$},\\
W_n&=\{+\infty\}\cup \{x\in\IR:x>n\}\cup \{(x,y)\in \IR\times\IQ_1:x>n\}\mbox{ for $n\in\IZ$}.
\end{aligned}
$$

\begin{example}\label{exM} The space $M$ has the following properties:
\begin{itemize}
\item[1)] $M$ is regular, first-countable and $\overline{\omega}$-normal;
\item[2)] $M$ is neither functionally Hausdorff nor strongly $\overline{\omega}$-normal.
\end{itemize}
\end{example}

\begin{proof} The definition of the topology of $M$ implies that this space is regular, first-countable and the closure $\overline{C}$ of any countable subset $C\subset M$  is contained in the countable set $$\{-\infty,+\infty\}\cup C\cup\{y,y-z: (y,z)\in C\}.$$
By Proposition~\ref{p:rn} the space $M$ is $\overline{\omega}$-normal.

By analogy with \cite{Mys} (see also \cite{BM} and \cite[1.5.9]{Eng}), it can be shown that $f(-\infty)=f(+\infty)$ for any continuous real-valued function $f$ which means that the space $M$ is not functionally Hausdorff.

Observe that the unit interval $\mathbb{I}=[0,1]$ is a closed discrete subspace of the space $M$.  Besides the discrete topology inherited from $M$, the interval $\II$ carries the standard Euclidean topology, inherited from the real line. The interval $\II$ endowed with the Euclidean topology will be denoted by $\II_E$. To show that the space $M$ is not strongly $\overline{\omega}$-normal we shall need the following fact.

\begin{claim}\label{claim} For any dense subset $A$ in $\II_E$ and any open neighborhood $U$ of $A$ in $M$ the intersection $\overline{U}\cap \II$ is a comeager subset of $\II_E$.
\end{claim}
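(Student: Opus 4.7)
The plan is to prove the equivalent statement that $\II\setminus\overline U$ is meager in $\II_E$. First I would characterise which points of $\II$ lie in $\overline U$: since any basic $M$-neighbourhood of a point $x\in\II\subseteq\IR$ reduces, after intersecting with a sufficiently large $U_n$ and a sufficiently small $W_m$, to a set of the form $V_x\setminus F$ with finite $F\subseteq\IR\times\IQ_1$, we have $x\in\overline U$ if and only if $x\in U$ or $V_x\cap U$ is infinite. This decomposes the complement as
\[
\II\setminus\overline U=\bigcup_{n\in\w}E_n,\qquad E_n:=\{x\in\II\setminus U:|V_x\cap U|\le n\},
\]
and it suffices to prove that each $E_n$ is nowhere dense in $\II_E$.

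The essential input is that $U$ is an $M$-neighbourhood of every point $a\in A$: as basic neighbourhoods of such $a$ have the form $V_a\setminus F$ with finite $F$, there exists a finite $F_a\subseteq\IR\times\IQ_1$ with $V_a\setminus F_a\subseteq U$. In particular, for every $a\in A$ the vertical fibre $\{a\}\times\IQ_1$ and the diagonal fibre $\{(a+q,q):q\in\IQ_1\}$ are contained in $U$ up to finitely many exceptions.

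Given a nonempty Euclidean-open interval $I\subseteq\II_E$ and $n\in\w$, the plan is to produce an open subinterval $J\subseteq I$ disjoint from $E_n$. Using the density of $A$ in $\II_E$, one selects a rich enough finite configuration $a_0,\dots,a_n\in A\cap I$ together with rationals $q_0,\dots,q_n\in\IQ_1$ so that the points $(a_i+q_i,q_i)\in V_{a_i}\setminus F_{a_i}\subseteq U$ are all distinct, and then chooses $J\subseteq I$ so that for every $x\in J$ these $n+1$ points simultaneously appear inside $V_x$ as shared points with the $V_{a_i}$'s. This forces $|V_x\cap U|\ge n+1$ for every $x\in J$, whence $J\cap E_n=\emptyset$, and the Baire space $\II_E$ then yields that $\bigcup_n E_n$ is meager and $\overline U\cap\II$ is comeager.

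The main obstacle is arithmetic/combinatorial: for distinct $x,a\in\IR$ the intersection $V_x\cap V_a\cap(\IR\times\IQ_1)$ consists of at most the two specific points $(x,x-a)$ and $(a,a-x)$, present iff $x-a\in\IQ_1$ respectively $a-x\in\IQ_1$. Arranging $n+1$ such coincidences simultaneously for every $x$ in a nondegenerate interval $J$ requires selecting the $a_i$'s so that the rational-translate conditions $x-a_i\in\IQ_1$ (or $a_i-x\in\IQ_1$) hold compatibly throughout $J$; this is precisely where the density of $A$ in $\II_E$, combined with the cofinite coverage $V_a\setminus F_a\subseteq U$ for all $a\in A$, supplies the configurational freedom that drives the construction.
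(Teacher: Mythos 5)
Your reduction is fine up to a point: since the basic neighborhoods of a point $x\in\IR$ in $M$ are exactly the sets $V_x\setminus F$ with $F\subset\IR\times\IQ_1$ finite, a point $x\in\II$ lies in $\overline U$ iff $x\in U$ or $V_x\cap U$ is infinite, so the decomposition $\II\setminus\overline U=\bigcup_n E_n$ is legitimate, and your computation of $V_x\cap V_a$ is correct. The fatal step is precisely the one you flag as the ``main obstacle'': you need a nondegenerate interval $J\subset I$ such that every $x\in J$ has the chosen points $(a_i+q_i,q_i)$ inside $V_x$. But a fixed point $(z,y)\in\IR\times\IQ_1$ belongs to $V_x$ only for $x\in\{z,z-y\}$, i.e.\ for exactly two values of $x$; more generally $V_x\cap V_{a_i}\ne\emptyset$ forces $x-a_i\in\IQ_1$ or $a_i-x\in\IQ_1$, so the set of $x$ for which $V_x$ meets $V_{a_0}\cup\dots\cup V_{a_n}$ is contained in the countable set $\bigcup_i(a_i+\IQ)$ and can never contain an interval. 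Hence no choice of configuration makes $J\cap E_n=\emptyset$ for a nondegenerate $J$, and the nowhere-density argument collapses; this is not a technical point to be patched but an impossibility.

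Worse, the obstruction cannot be engineered around, because the statement itself fails for $M$ as literally defined (with second coordinates in $\IQ_1$): take $A=\IQ\cap\II$ and $U=\bigcup_{a\in A}V_a$, an open neighborhood of $A$. For every irrational $x\in\II$ we have $x\notin U$ and $V_x\cap V_a=\emptyset$ for all $a\in A$ (a common point would force $x-a\in\IQ$), so $V_x$ witnesses $x\notin\overline U$; thus $\overline U\cap\II\subset\IQ$ is meager rather than comeager. The same rational-translate difficulty occurs in the final step of the paper's own argument, where one needs a point of $A$ and a point of the nonmeager set $B_F$ at rational difference, which density alone does not supply; the claim would become plausible if the fibres carried all real levels in $(0,1)$ (as in Mysior's original example), but then $M$ would no longer be first countable. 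So the gap in your proposal is real: the density of $A$ in $\II_E$ gives no control whatsoever over the countable family of rational translates along which $V_x$ can interact with $U$, and that control is exactly what any proof of the Claim would require.
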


\begin{proof} To derive a contradiction, assume that the set $B=\mathbb{I}\setminus \overline{U}$
is not meager in $\II_E$ and hence $B$ is of the second Baire category in $\mathbb{I}_E$. 
 Since $B\cap \overline{U}=\emptyset$, for every $b\in B$ there exists a finite subset $F_b$ of $\IQ_1$ and a basic open neighborhood
$$V_{F_b}=\{b\}\cup\{(z,y)\in \IR\times(\IQ_1\setminus F_b):z\in\{b,b+y\}\}$$
of $b$ such that $V_{F_b}\cap \overline U=\emptyset$. For each finite subset $F\subset \IQ_1$ put $B_F=\{b\in B:F_b=F\}$. Since the set of all finite subsets of $\IQ_1$ is countable and $B$ is of the second category, there exists a finite subset $F\subset \IQ_1$ such that the set $B_F$ is not meager in $\mathbb{I}_E$.
Hence there exists an interval $(c,d)\subset \mathbb{I}_E$ such that $B_F$ is dense in $(c,d)$. Recall that $A$ is dense in $\mathbb{I}_E$. At this point it is easy to check that $\emptyset\ne U\cap \bigcup_{b\in B_{F}}V_{F_b}\subset\overline{U}\cap\bigcup_{b\in B_F}V_{F_b}=\emptyset$, which is a desired contradiction.
\end{proof}
Recall that the subspace $\mathbb{I}\subset M$ is discrete. Let $A:=\IQ\cap \mathbb{I}$ and $B:=(\IQ+\sqrt{2})\cap \mathbb{I}$ be two closed countable disjoint subsets of $M$. Assuming that the space $M$ is strongly $\overline\w$-normal, we can find open sets $U_A$ and $U_B$ in $M$ such that $A\subset U_A$, $B\subset U_B$ and $\overline{U_A}\cap\overline{U_B}=\emptyset$. By Claim~\ref{claim}, the sets $\overline{U_A}\cap\II$ and $\overline{U_B}\cap\II$ are comeager in $\mathbb{I}_E$ and hence have nonempty intersection and this is a desired contradiction showing that the space $M$ is not strongly $\overline{\omega}$-normal.
\end{proof}

\begin{remark}
The space $M\setminus\{-\infty,+\infty\}$ is Tychonoff, zero-dimensional, locally compact, locally countable, $\overline{\omega}$-normal but not strongly $\overline{\omega}$-normal.
\end{remark}

Now we present an example of a regular, $\omega$-bounded, totally $\overline{\omega}$-normal space which is not functionally Hausdorff.
Let $[0,\alpha]$ be the ordinal $\alpha+1$ endowed with the order topology. Let $T=[0,\omega_1]{\times}[0,\omega_2]\setminus \{(\omega_1,\omega_2)\}$ be the subspace of the Tychonoff product $[0,\omega_1]{\times}[0,\omega_2]$. Observe that $T$ is $\omega$-bounded. Let $\IZ$ be the discrete space of integers and $-\infty,+\infty$ be distinct points which do not belong to $T\times\IZ$. By $Y$ we denote the set $(T{\times}\IZ)\cup\{-\infty,+\infty\}$ endowed with the topology $\tau$ which satisfies the following conditions:
\begin{itemize}
\item the Tychonoff product $T{\times}\IZ$ is an open subspace in $Y$;
\item if $-\infty\in U\in\tau$, then there exists $n\in\omega$ such that $\{(t,k)\in T\times\IZ:k<-n\}\subset U$;
\item if $+\infty\in U\in\tau$, then there exists $n\in\omega$ such that $\{(t,k)\in T\times\IZ: k>n\}\subset U$.
\end{itemize}
One can check that the space $Y$ is regular and $\omega$-bounded.

On the space $Y$ consider the smallest equivalence relation $\sim$ such that
$(x,\omega_2,2n)\sim (x,\omega_2,2n+1)$ and  $(\omega_1,y,2n)\sim(\omega_1,y,2n-1)$ 
for any $n\in \IZ$, $x\in\omega_1$ and $y\in\omega_2$.
Let $X$ be the quotient space $Y/_\sim$ of $Y$ by the equivalence relation $\sim$.

\begin{example}\label{ex} The space $X$ is regular, $\omega$-bounded and totally $\overline\w$-normal, but not functionally Hausdorff and hence is not $\overline{\w}$-Tychonoff.
\end{example}

\begin{proof}
Since the $\omega$-boundedness is preserved by continuous images, the space $X$ is $\omega$-bounded. Using the classical argument due to Tychonoff (see \cite[p.109]{SS}), it can be shown that the space $X$ is regular, but for each real-valued continuous function $f$ on $X$, $f(-\infty)=f(\infty)$. Hence $X$ is not functionally Hausdorff. By Proposition~\ref{p:cn2}, $X$ is totally $\overline{\omega}$-normal.
\end{proof}

\begin{remark}
For each infinite cardinal $\kappa$ the punctured Tychonoff plank $[0,\kappa]{\times}[0,\kappa^+]\setminus \{(\kappa,\kappa^+)\}$ is an example of strongly $\overline{\kappa}$-normal space which is not totally $\overline{\kappa}$-normal.
\end{remark}

A topological space $X$ is called
\begin{itemize}
\item  {\em $H$-compact} if for any open cover $\U$ of $X$ there exists a finite subfamily $\V\subset\U$ such that $X=\bigcup_{V\in\V}\overline V$;
\item {\em $H$-closed} if $X$ is Hausdorff and $H$-compact.
\end{itemize}
It is clear that each compact space is $H$-compact. By \cite[3.12.5]{Eng}, a Hausdorff topological space $X$ is $H$-closed if and only if it is closed in each Hausdorff space containing $X$ as a subspace.
\smallskip

For each infinite cardinal $\kappa$ we shall construct a  $\overline{\kappa}$-normal, $\kappa$-bounded, $H$-compact Hausdorff space which is not Urysohn.
Given an infinite cardinal $\kappa$, denote by $C$ the set of all isolated points of the cardinal $\kappa^+=[0,\kappa^+)$ endowed with the order topology. Write $C$ as the union $C=A\cup B$ of two disjoint unbounded subsets of $\kappa^+$. Choose any points $a,b\notin\kappa^+$ and consider the space $X_{\kappa}=\kappa^+\cup\{a,b\}$ endowed with the topology $\tau$ satisfying the following conditions:
\begin{itemize}
\item $\kappa^+$ with the order topology is an open subspace of $X_{\kappa}$;
\item if $a{\in} U{\in} \tau$, then there exists $\alpha\in\kappa^+$ such that $\{\beta\in A:\beta>\alpha\}\subset U$;
\item if $b{\in} U{\in} \tau$, then there exists $\alpha\in\kappa^+$ such that $\{\beta\in B:\beta>\alpha\}\subset U$.
\end{itemize}

\begin{example}\label{ex1}
For each cardinal $\kappa$ the space $X_{\kappa}$ is $\overline{\kappa}$-normal,  $\kappa$-bounded, $H$-compact and Hausdorff, but not Urysohn.
\end{example}
\begin{proof}
It is straightforward to check that $X_{\kappa}$ is $\overline{\kappa}$-normal, $\kappa$-bounded, and Hausdorff. The $H$-compactness of $X_\kappa$ follows from the observation that for any open neighborhood $U\subset X_\kappa$ of the doubleton $\{a,b\}$ the closure $\overline{U}$ contains the interval $[\alpha,\kappa^+)$ for some ordinal $\alpha\in\kappa^+$.

To see that $X_{\kappa}$ is not Urysohn observe that for any open neighborhoods $U_a$ and $U_b$ of $a$ and $b$, respectively, the sets $\overline{U_a}\cap\kappa^+$ and $\overline{U_b}\cap\kappa^+$ are closed and unbounded in $\kappa^+$. Hence $\overline{U_a}\cap \overline{U_b}\neq\emptyset$.
\end{proof}

Next, we are going to present a (consistent) example of a separable sequentially compact scattered space $X$ which is regular but not $\overline\w$-Tychonoff and hence cannot be embedded into an $\w$-bounded Hausdorff space.

This example is a combination of van Douwen's example \cite[7.1]{vD} of a locally compact sequentially compact space, based on a regular tower, and the famous example of Tychonoff corkscrew due to Tychonoff, see \cite[p.10]{SS}.
First we recall the necessary definitions related to (regular) towers.

By $[\w]^\w$ we denote the family of all infinite subsets of $\w$. For two subsets $A,B\in[\w]^\w$ we write $A\subseteq^* B$ if $A\setminus B$ is finite. Also we write $A\subset^* B$ if $A\subseteq^* B$ but $B\not\subseteq^* A$.
A family $\mathcal T\subset[\w]^\w$ is called a {\em regular tower} if for some regular cardinal $\kappa$ the family $\mathcal T$ can be written as $\mathcal T=\{T_\alpha\}_{\alpha\in\kappa}$ so that
\begin{itemize}
\item[(1)] $T_\beta\subset^* T_\alpha$ for any ordinals $\alpha<\beta$ in $\kappa$, and
\item[(2)] for any $I\in[\w]^\w$ there exists $\alpha\in\kappa$ such that $I\not\subseteq^* T_\alpha$.
\end{itemize}
The first condition implies that the sets $T_\alpha$, $\alpha\in\kappa$, are distinct and hence $\kappa=|\mathcal T|$. Also this condition implies that the relation $\supset^*$ is a well-order on $\mathcal{T}$. 

Consider the uncountable cardinals $$
\begin{aligned}
\mathfrak t&=\min\{|\mathcal T|:\mathcal T\subset[\w]^\w\mbox{ \ is a regular tower}\}\\
\hat{\mathfrak t}&=\sup\{|\mathcal T|:\mathcal T\subset[\w]^\w\mbox{ \ is a regular tower}\}
\end{aligned}
$$ and observe that $\mathfrak t\le\hat{\mathfrak t}\le\mathfrak c$. It is well-known that Martin's Axiom implies the equality $\mathfrak t=\hat{\mathfrak t}=\mathfrak c$.

\begin{proposition}\label{p:Zd} The strict inequality $\mathfrak t<\hat{\mathfrak t}$ is consistent. Also $\mathfrak t=\hat{\mathfrak t}=\w_1<\w_2=\mathfrak c$ is consistent.
\end{proposition}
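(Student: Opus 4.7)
The plan has two separate ingredients, since the two consistency claims target different models of $\mathrm{ZFC}$.

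For the second claim, $\mathfrak t=\hat{\mathfrak t}=\w_1<\w_2=\mathfrak c$, the natural candidate is the Cohen model. Concretely, I would start with a ground model $V\models\mathrm{CH}$ and force with $\mathrm{Fn}(\w_2,2)$. Standard cardinal arithmetic gives $\mathfrak c=\w_2$ in the extension $V[G]$. For the lower estimate $\mathfrak t\le\w_1$ I would fix in $V$ an $\w_1$-tower $\{T_\alpha\}_{\alpha<\w_1}$ (available by CH) and show that it remains a tower in $V[G]$: by the $\mathrm{ccc}$, any nice name for an infinite $I\subseteq\w$ involves only countably many coordinates, so $I\in V[G\restriction A]$ for a countable $A\subset\w_2$; a density argument exploiting the infiniteness of each $\w\setminus T_\alpha$ then produces some $\alpha<\w_1$ with $I\not\subseteq^* T_\alpha$. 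For the bound $\hat{\mathfrak t}\le\w_1$ I would assume for contradiction that $\{T_\alpha\}_{\alpha<\w_2}$ is a tower in $V[G]$, apply a $\Delta$-system / reflection argument to the nice names for the initial segment $\{T_\alpha:\alpha<\w_1\}$ to find an ordinal $\gamma<\w_2$ of uncountable cofinality with the whole segment living in $V[G\restriction\gamma]$, and then use genericity of any Cohen real added past $\gamma$ to manufacture from it an infinite pseudo-intersection of the segment. Combined with the tail $T_\beta$, $\beta\ge\w_1$, this contradicts condition~(2) of a tower.

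For the first claim, $\mathfrak t<\hat{\mathfrak t}$, I would combine a CH ground-model tower with a forcing iteration that builds a longer tower. Start with $V\models\mathrm{CH}$ and fix an $\w_1$-tower $\{T_\alpha\}_{\alpha<\w_1}\in V$. Then carry out a countable-support iteration of proper forcing of length $\w_2$, whose iterands are Mathias-type forcings diagonalizing carefully chosen filters, so that in the final extension $V^{\mathbb P_{\w_2}}$ the generic reals assemble into a $\subset^*$-decreasing sequence $\{S_\alpha\}_{\alpha<\w_2}$ with no pseudo-intersection, giving $\hat{\mathfrak t}\ge\w_2$. The iterands will be tuned so that the fixed ground-model tower $\{T_\alpha\}$ continues to have no pseudo-intersection at every stage, giving $\mathfrak t\le\w_1<\w_2\le\hat{\mathfrak t}$.

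The hard part will be precisely this selective preservation: each iterand must be strong enough to extend the growing $\w_2$-tower (i.e.\ add a pseudo-intersection to a prescribed long chain) yet weak enough not to add a pseudo-intersection to the frozen $\w_1$-tower $\{T_\alpha\}$. I would handle it by invoking a Laver-type preservation theorem for countable-support iterations of proper forcings---the standard mechanism for keeping certain infinite combinatorial configurations ``unfilled'' while still adding many new reals---rather than attempting an ad hoc direct argument.
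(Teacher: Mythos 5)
Your treatment of the second claim, $\mathfrak t=\hat{\mathfrak t}=\w_1<\w_2=\mathfrak c$, has a fatal step. In the Cohen model you propose to refute a putative tower $\{T_\alpha\}_{\alpha<\w_2}$ by reflecting its first $\w_1$ members into $V[G\restriction\gamma]$ and then using a Cohen real added past $\gamma$ to ``manufacture'' an infinite pseudo-intersection of that segment. Cohen reals cannot do this: the well-known preservation fact --- the very one you rely on to get $\mathfrak t\le\w_1$, and the one the paper itself invokes --- is that Cohen forcing adds \emph{no} infinite pseudo-intersection to a ground-model $\subset^*$-decreasing sequence of uncountable cofinality. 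Moreover, even granted such a set $I\subseteq^* T_\alpha$ for all $\alpha<\w_1$, there is no contradiction: the later members $T_\beta$, $\beta\ge\w_1$, are themselves pseudo-intersections of that initial segment, and condition (2) in the definition of a regular tower is violated only by a pseudo-intersection of the \emph{whole} $\w_2$-chain. Finally, it is not at all clear (and you give no evidence) that $\hat{\mathfrak t}=\w_1$ holds in the Cohen model in the first place; the paper does not attempt this, but cites Theorem 4.1 of Baumgartner--Dordal \cite{BD}, whose model is the finite-support Hechler iteration, where the dominating reals are essential to the argument that long $\subset^*$-chains acquire pseudo-intersections.

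For the first claim, $\mathfrak t<\hat{\mathfrak t}$, your route differs genuinely from the paper's, but its decisive step is missing: you must show that an $\w_2$-length countable-support iteration of Mathias-type forcings --- posets designed precisely to add pseudo-intersections to prescribed filters --- never adds a pseudo-intersection to the frozen ground-model $\w_1$-tower, and invoking an unspecified ``Laver-type preservation theorem'' is a placeholder rather than a proof; tuning the iterands to be simultaneously strong enough to extend the long tower and provably harmless to the fixed one is exactly where all the work lies. The paper obtains the same conclusion in one step: start with $V\models\mathbf{MA}+\neg\mathbf{CH}$, where $\mathfrak t=2^\w>\w_1$ yields a maximal (regular) tower of length $(2^\w)^V$, and add $\w_1$ Cohen reals; in the extension $\mathfrak t=\mathfrak b=\w_1$, while the old tower remains regular because Cohen forcing adds no pseudo-intersections to maximal towers, whence $\mathfrak t<\hat{\mathfrak t}$. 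I would recommend rebuilding your argument around such a single-step preservation observation rather than an iteration with a delicate and unproven preservation problem.
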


\begin{proof} The consistency of $\mathfrak t=\hat{\mathfrak t}=\w_1<\w_2=\mathfrak c$ was proved in \cite[Theorem 4.1]{BD}.
\smallskip

To prove the consistency of $\mathfrak t<\hat{\mathfrak t}$, assume that {\bf MA}+$\neg${\bf CH} holds in the ground model $V$ and let $V'$
 be the forcing extension of $V$ obtained by adding $\omega_1$ many Cohen reals.
Then $\mathfrak t=\mathfrak b=\omega_1$ in $V'$, which yields a regular tower of
length $\omega_1$ in $V'$. On the other hand, any maximal tower from $V$ of length $(2^\omega)^V>\omega_1$
(which exists, because in $V$, $\mathfrak t=2^\omega>\omega_1$)
remains regular in $V'$ since it is well-known (and easy to check) that Cohen forcing cannot add infinite
pseudointersections to maximal towers. Hence $\mathfrak t<\hat{\mathfrak t}$ in $V'$.
\end{proof}


A topological space $X$ is called {\em $\vec\w$-regular} if for any open set $U\subset X$ and point $x\in U$ there exists a sequence $(U_n)_{n\in\w}$ of open neighborhoods of $x$ such that $\bigcup_{n\in\w}U_n\subset U$ and $\overline{U}_n\subset U_{n+1}$ for all $n\in\w$. It is easy to see that each completely regular space is $\vec\w$-regular.

\begin{example} If $\mathfrak t<\hat{\mathfrak t}$, then there exists a  topological space $X$ such that
\begin{enumerate}
\item $X$ is separable, scattered, and sequentially compact;
\item $X$ is regular but not $\vec\w$-regular and hence not completely regular and not $\overline\w$-Tychonoff;
\item $X$ does not embed into an $\w$-bounded Hausdorff space.
\end{enumerate}
\end{example}

\begin{proof} Since $\mathfrak t<\hat{\mathfrak t}$, there are two regular towers $\mathcal T_1=\{A_{\alpha}\}_{\alpha\in \kappa}$ and $\mathcal T_2=\{B_{\beta}\}_{\beta\in \lambda}$ such that $\kappa<\lambda$. For every $\alpha\in \kappa$ and $\beta\in\lambda$ consider the sets $C_{\alpha}=\omega\setminus A_{\alpha}$ and  $D_{\beta}=\omega\setminus B_{\beta}$. Let $\Tau_1=\{C_{\alpha}\}_{\alpha\in\kappa}$ and $\Tau_2=\{D_{\alpha}\}_{\alpha\in\lambda}$.
Obviously,  $\subset^*$ is a well order on $\Tau_1$ and $\Tau_2$. Also, observe that the families $\Tau_1$ and $\Tau_2$ satisfy the following condition: for any infinite subset $I$ of $\omega$ there exist $C_{\alpha}\in \Tau_1$ and $D_{\beta}\in \Tau_2$ such that the sets $I\cap C_{\alpha}$ and $I\cap D_{\beta}$ are infinite.

For every $i\in\{1,2\}$, consider the space $Y_i=\Tau_i\cup\w$ which is topologized as follows. Points of $\omega$ are isolated and a basic neighborhood of $T\in \Tau_i$ has the form
$$B(S,T,F)=\{P\in\Tau_i\mid S\subset^* P\subseteq^* T\}\cup ((T\setminus S)\setminus F),$$
where $S\in \Tau_i\cup\{\emptyset\}$ satisfies $S\subset^{*}T$ and $F$ is a finite subset of $\omega$.

Repeating arguments of Example 7.1~\cite{vD} one can check that the space $Y_i$ is sequentially compact, separable, scattered and locally compact for every $i\in\{1,2\}$.

For every $i\in\{1,2\}$ choose any point $\infty_i\notin Y_i$ and let $X_i=\{\infty_i\}\cup Y_i$ be the one-point compactification of the locally compact space $Y_i$. It is easy to see that the compact space $X_i$ is scattered, $i\in\{1,2\}$.

Consider the space $\Pi=(X_1\times X_2)\setminus\{(\infty_1,\infty_2)\}$. It is easy to check that the space $\Pi$ is separable, scattered and sequentially compact.

Choose any point $\infty\notin \Pi\times\w$ and consider the space $\Sigma=\{\infty\}\cup(\Pi\times\{\w\})$ endowed with the topology consisting of the sets $U\subset\Sigma$ satisfying two conditions:
\begin{itemize}
\item for any $n\in\w$ the set $\{z\in \Pi:(z,n)\in U\}$ is open in $\Pi$;
\item if $\infty\in U$, then there exists $n\in\w$ such that $\bigcup_{m\ge n}\Pi\times\{m\}\subset U$.
\end{itemize}
Taking into account that the space $\Pi$ is separable, scattered and sequentially compact, we conclude that so is the space $\Sigma$.
On the space $\Sigma$ consider the smallest equivalence relation $\sim$ such that
$(x_1,\infty_2,2n)\sim (x_1,\infty_2,2n+1)$ and $(\infty_1,x_2,2n+1)\sim(\infty_1,x_2,2n+2)$ 
for any $n\in\w$ and $x_i\in X_i\setminus\{\infty_i\}$, $i\in\{1,2\}$.
Let $X$ be the quotient space $\Sigma/_\sim$ of $\Sigma$ by the equivalence relation $\sim$. Observe that the character of the space $X_1$ at $\infty_1$ is equal to the regular cardinal $|\Tau_1|=\kappa$ and is strictly smaller than the pseudocharacter of the space  $X_2$ at $\infty_2$, which is equal to the regular cardinal $|\Tau_2|=\lambda$. Using this observation and repeating the classical argument due to Tychonoff (see \cite[p.109]{SS}), it can be shown that the space $X$ is regular but not $\vec\w$-regular (at the point $\infty$), and hence not Tychonoff and not $\overline\w$-Tychonoff (since for separable $T_1$-spaces the Tychonoff property is equivalent to the $\overline\w$-Tychonoff property). By Proposition~\ref{p:Tych}, the separable space $X$ does not embed into an $\w$-bounded Hausdorff space.
\end{proof}

\begin{question}
Does there exists in ZFC an example of a separable regular sequentially compact space which is not Tychonoff?
\end{question}

\section*{Acknowledgements}
The  authors would like to express their sincere thanks to Lyubomyr Zdomskyy for valuable comments and especially for the idea of the proof of Proposition~\ref{p:Zd}.

\end{document}